\documentclass{article}
\usepackage[utf8]{inputenc}
\usepackage{amssymb, amsmath, amsfonts, blkarray, amsthm}
\usepackage[square]{natbib}
\usepackage{hyperref}
\usepackage{titlesec}
\usepackage{parskip}
\usepackage{listings} 
\usepackage{lineno} 
\usepackage[normalem]{ulem} 
\usepackage{xcolor, graphicx}
\usepackage{tabularx}
\usepackage{stmaryrd}
\usepackage{enumerate}
\usepackage{mathrsfs}
\usepackage{subcaption}
\usepackage{float}
\usepackage{booktabs}
\usepackage{authblk}

\newcommand{\KL}[1]{D_{\mathrm{KL}}^{#1}}

\newtheorem{theorem}{Theorem}[section]
\newtheorem{corollary}[theorem]{Corollary}
\newtheorem{lemma}[theorem]{Lemma}
\newtheorem{prop}[theorem]{Proposition}

\allowdisplaybreaks

\DeclareMathOperator{\Tr}{Tr}
\DeclareMathOperator*{\argmin}{arg\,min}
\DeclareMathOperator*{\argmax}{arg\,max}

\title{On additive averaging kernels for finite Markov chains}
\author[1]{Ryan J.Y. Lim\thanks{Email: ryan.limjy@u.nus.edu}}

\author[1]{Michael C.H. Choi\thanks{Email: mchchoi@nus.edu.sg, corresponding author}}

\affil[1]{Department of Statistics and Data Science, National University of Singapore, Level 7, 6 Science Drive 2, 117546, Singapore}
\date{\today}

\begin{document}

\maketitle
\begin{abstract}
    We study additive mixtures of Markov kernels of the form $A_\alpha = \alpha P + (1-\alpha)G$, where $\alpha \in [0,1]$, $P$ is a baseline sampler and $G$ is a Gibbs kernel induced by a partition of the state space. We first motivate the study of $A_\alpha$, which can be interpreted as the projection of a lifted Markov chain. We then consider the minimisation of distance to stationarity under two objectives: the squared Frobenius norm and the Kullback–Leibler (KL) divergence. For the Frobenius objective, we derive explicit trace formulae and identify a Cheeger-type functional that characterises optimal two-block partitions. This yields a structured combinatorial optimisation problem admitting a difference-of-submodular decomposition, enabling efficient approximation via majorisation–minimisation. We also obtain geometric decay rates governed by the absolute spectral gap of $P$. For the KL divergence, we establish convexity-based bounds showing that the divergence of $A_\alpha$ is controlled by those of both $P$ and $G$, thereby reducing partition selection to the Gibbs component. Numerical experiments on the Curie-Weiss model demonstrate that suitable choice of both the partition and the parameter $\alpha$ can significantly accelerate convergence in total variation distance. We observe a consistent trade-off between local exploration and global averaging, with intermediate values of $\alpha$ achieving the best performance across regimes. \\

\textbf{Keywords}: Markov chains, additive averaging, Kullback-Leibler divergence, Frobenius norm, Markov chain Monte Carlo, submodularity, majorisation-minimisation, Cheeger's constant \\

\textbf{AMS 2020 subject classification}: 60J10, 60J22, 65C40, 90C27, 94A15, 94A17
\end{abstract}

\section{Introduction}
This work is motivated by recent advancements in group-averaged Markov chains, where \cite{choi2025groupaveragedmarkovchainsmixing,GPG} showed that, given a Gibbs kernel $G$, kernels of the form $GPG$ can exhibit significant improvements over the base kernel $P$. Improvements in metrics such as mixing time, spectral gap, and Kullback-Leibler (KL) divergence have been established. While such composition-based constructions are effective, they involve the composition of several kernels at each transition. This naturally motivates the study of additive analogues, which are structurally simpler.

In this paper, we study additive mixtures of the form $A_\alpha = \alpha P + (1-\alpha)G$, where $P$ is a given $\pi$-stationary Markov kernel and $G$ is the Gibbs kernel induced by a partition $\mathcal{X} = \bigsqcup_{i=1}^k\mathcal{O}_i$. Such additive constructions arise naturally as convex combinations of transition kernels and can be interpreted as randomised schemes that alternate between local updates via $P$ and global averaging within blocks via $G$. 

However, unlike the composition-based kernels previously studied, we do not claim a universal improvement over $P$. The performance and practicality of the additive mixture, as we will show, are highly dependent on the partition inducing $G$, as well as the mixing parameter $\alpha$. A poorly chosen partition may average over regions that are not useful for improving convergence, whereas a well-chosen partition can exploit meaningful structure in the state space.

We emphasise that exact implementation of $G$ requires the ability to sample from $\pi$ restricted to the blocks of the partition. Thus, the additive construction is not necessarily computationally cheap. There is a natural trade-off: coarser partitions may provide stronger averaging but can be more expensive to implement, while finer partitions are typically cheaper but may provide less global averaging.

From an algorithmic point of view, the partition $(\mathcal{O}_i)_{i=1}^k$ may be viewed as a user-chosen parameter. In many applications, such a partition can be induced by a natural summary statistic, for example magnetisation in spin systems, energy level, or simply an event and its complement. This motivates the study of optimal partitions under different objective functions, with the goal of studying how and why certain partitions perform better than others.

To achieve our aims, we formulate our optimisation problems based on the Frobenius norm and the KL divergence to stationarity. The Frobenius objective provides an averaged spectral measure of discrepancy and leads to explicit trace formulae, while the KL divergence gives an information-theoretic measure that is naturally suited to convexity-based bounds. These objectives allow us to study the effect of both the partition and the parameter $\alpha$ on convergence behaviour.

The numerical experiments then complement this theoretical perspective. Although the theory does not imply that $A_\alpha$ uniformly improves upon $P$, the experiments show that both natural and optimised partitions can lead to improved mixing when paired with a suitable choice of $\alpha$

We begin with a recap of key definitions in Section \ref{sec: prelim}, followed by the construction of a lifted kernel associated with $A_\alpha$ in Section \ref{sec: lift}. We show that the lifted kernel, when projected onto the first coordinate, recovers the mixture sampler $A_\alpha$, thereby providing an alternative interpretation of $A_\alpha$ as the marginal of a lifted Markov chain.

In Section \ref{sec: frobnorm}, we show that for the Frobenius objective, the optimisation over two-block partitions $\mathcal{X} = S \sqcup S'$ admits an explicit characterisation in terms of a Cheeger-type functional associated with the projection chain of $P$. We also present a singleton approximation as a computationally efficient alternative to full combinatorial optimisation. Furthermore, the optimisation problem admits a decomposition into a difference-of-submodular functions, enabling the use of approximation algorithms such as majorisation–minimisation methods to identify near-optimal partitions. Additionally, we derive spectral bounds that yield geometric decay rates for the Frobenius distance in terms of the absolute spectral gap of $P$.

In Section \ref{sec: KL}, we study the KL divergence and show that the additive structure yields convexity-based bounds in terms of the KL divergences of $P$ and $G$ to stationarity. As a result, the choice of partition influences the bound only through the contribution of $G$. We then explicitly characterise the optimal partition for this term, leading to a concrete upper bound on the KL divergence of $A_\alpha$ from $\Pi$.

In Section \ref{sec: exp}, we present numerical experiments using the Curie–Weiss model as a benchmark. Taking $P$ to be the Glauber dynamics, we compare the samplers $GPG$, $GP$, $A_{1/2}$, and $P$ under both fixed and optimised two-block partitions. We observe a consistent hierarchy in mixing performance across these samplers. We further investigate the role of the mixing parameter $\alpha$, which governs the trade-off between local exploration and global averaging. In contrast to the extreme cases $\alpha = 0$ and $\alpha = 1$, which correspond to purely averaging dynamics and purely local dynamics respectively, we demonstrate that intermediate values of $\alpha$ can significantly improve convergence.

\subsection{Related works}
A broad range of approaches have been proposed to accelerate the convergence of Markov chains beyond standard reversible samplers. \cite{chen_lifting} introduced lifting and state-space augmentation techniques, where auxiliary variables enlarge the state space to reduce diffusive behaviour and enable faster traversal across bottlenecks. Closely related are non-reversible samplers, which break detailed balance to induce persistent motion; \cite{TURITSYN2011410} show that such constructions can improve spectral gaps and asymptotic variance in various settings. These two directions have also been unified through a framework proposed in \cite{Vucelja_lifting}, in which non-reversible dynamics arise naturally from lifting constructions.

Another line of work considers mixtures of Markov kernels within the MCMC framework. Notably, the random scan Gibbs sampler which is widely used in practice (see, for example, \cite{levine_gibbs} for a review), can be formulated as a mixture of coordinate-wise Gibbs kernels. \cite{roberts_geometric} study such mixtures from the perspective of geometric convergence. However, in most of these approaches, the relative weights given to each kernel are fixed \emph{a priori}, and relatively little attention has been paid to how such combinations should be structured or optimised in relation to the geometry of the state space.

In our context, the additive kernel $A_\alpha$ studied in this paper can be interpreted as the projection of a lifted chain. Unlike typical lifting-based approaches, however, our construction preserves the reversibility of the base kernel $P$. Furthermore, the choice of the parameter $\alpha$, which determines the relative weighting between the baseline sampler $P$ and the Gibbs kernel $G$, plays a crucial role, as it has a significant impact on the performance of the mixture sampler $A_\alpha$, with intermediate values often yielding the best convergence behaviour.

\section{Preliminaries and setup} \label{sec: prelim}
For any integers $a \leq b \in \mathbb{Z}$, we set $\llbracket a,b \rrbracket := \{a, a+1, \dots, b\}$ and for $n \in \mathbb{N}$, $\llbracket n \rrbracket := \llbracket 1, n \rrbracket.$ With this, unless otherwise specified we take the state space $\mathcal{X} = \llbracket n \rrbracket$, and we write $\mathcal{P}(\mathcal{X})$ to be the set of all strictly positive probability masses on $\mathcal{X}$. That is, $\pi \in \mathcal{P}(\mathcal{X})$ satisfies $\pi(x) > 0$ for all $x \in \mathcal{X}.$

For any given $\pi \in \mathcal{P}(\mathcal{X}),$ we take $\Pi$ to be the $n \times n$ transition matrix on $\mathcal{X}$ with all rows equal to $\pi$. We further define $\mathcal{S}(\pi)$ as the set of all $\pi$-stationary transition matrices. Equivalently, for $P \in \mathcal{S}(\pi)$, we have that $\pi P = \pi$. 

We will also use $\mathcal{L}(\pi)$ to denote the set of all $\pi$-reversible transition matrices. That is, for $P \in \mathcal{L}(\pi)$, the detailed balance condition $\pi(x) P(x,y) = \pi(y) P(y,x)$ holds for any $x,y \in \mathcal{X}$.

We define $\ell^2(\pi)$ to be usual $\pi$-weighted Hilbert space, prescribed with the inner product $\langle \cdot, \cdot \rangle_\pi: \ell^2(\pi) \times \ell^2(\pi) \to \mathbb{R}$ given by 
$$\langle f,g \rangle_\pi := \sum_{x\in \mathcal{X}} f(x)g(x)\pi(x).$$
The $\ell^2(\pi)$-norm is then $\|f\|_\pi = \langle f,f \rangle_\pi^{1/2}$. We let 
$$\ell^2_0(\pi) := \{f \in \ell^2(\pi): \pi(f) = 0\}$$
denote the zero-mean subspace. Note that any transition matrix $P$ can also be viewed as an operator on $\ell^2(\pi)$, since
$$Pf(x) = \sum_{y \in \mathcal{X}} P(x,y) f(y)$$
is also a function in $\ell^2(\pi)$.

For any $\pi$-stationary $P \in \mathcal{S}(\pi)$, we define $P^*$ to be the $\ell^2(\pi)$-adjoint of $P$. In particular, $P^* = P$ if and only if $P \in \mathcal{L}(\pi).$ 

\subsection{Gibbs kernel}
Suppose we have a partition of the state space $\mathcal{X} = \bigsqcup_{i=1}^k \mathcal{O}_i$. We recall the Gibbs kernel $G \in \mathcal{L}(\pi)$ associated with the partition $(\mathcal{O}_i)_{i=1}^k$, as introduced in \cite{GPG}:
\begin{equation}
    G(x,y) = \begin{cases}
\frac{\pi(y)}{\pi(\mathcal{O}(x))},& \mathrm{for}\ y \in \mathcal{O}(x),\\
0, & \mathrm{otherwise},
\end{cases}
\end{equation}
where $\mathcal{O}(x) = \mathcal{O}_i$ with $x \in \mathcal{O}_i$, and $\pi(\mathcal{O}(x)) := \sum_{z \in \mathcal{O}(x)} \pi(z)$.

It is easy to verify that $G$ is $\pi$-reversible and idempotent, that is, $G^2 = G$. At any given state $x \in \mathcal{O}(x)$, one may view $G(x,\cdot)$ as a resampling from the target distribution restricted to the orbit $\mathcal{O}(x)$. 

\subsection{Projection chains}
Another important kernel which will be discussed extensively is the projection chain of a Markov kernel $P$. We recall from \cite{Jerrum_2004} that for any given partition $\mathcal{X} = \bigsqcup_{i=1}^k \mathcal{O}_i$ of the state space, we define $\overline{P}: \llbracket k \rrbracket \times \llbracket k \rrbracket \to [0,1]$ by
\begin{align} \label{eq:Pbar}
    \overline{P}(i,j) := \frac{1}{\pi(\mathcal{O}_i)} \sum_{\substack{x \in \mathcal{O}_i \\ y \in \mathcal{O}_j}} \pi(x) P(x,y).
\end{align}

Note that $\overline{P}$ has stationary distribution $\overline{\pi} = (\pi(\mathcal{O}_1), \dots, \pi(\mathcal{O}_k)).$ Further, if $P$ is $\pi$-reversible, then $\overline{P}$ will also be $\overline{\pi}$-reversible as well. 

Notably, the projection chain acts on the smaller state space $\{\mathcal{O}_i: i \in \llbracket k\rrbracket\}$. This dimensional reduction is a key motivation for linking group-averaged kernels with projection chains, as it allows for significantly simpler analysis on the quotient space. 

In the subsequent sections, we will relate additive mixtures of kernels to lifted constructions and investigate their behaviour through projection chains.

\subsection{Cheeger's constant and eigenvalues}
We briefly recall notions of eigenvalues and Cheeger's constant, which will be discussed extensively in relation to the Frobenius norm. 

Define the classical Cheeger's constant of a Markov chain $P$ to be 
\begin{align*}
    \Phi^*(P) := \min_{S;~0 < \pi(S) \leq \frac{1}{2}} \dfrac{\sum_{x \in S;\,y \in S'} \pi(x)P(x,y)}{\pi(S)}.
\end{align*}
A Cheeger's cut of $P$ is then any set $S^*$ attaining the minimum, that is, 
$$S^* \in \argmin_{S;~0 < \pi(S) \leq \frac{1}{2}} \frac{\sum_{x \in S;\,y \in S'} \pi(x)P(x,y)}{\pi(S)}.$$
Note that $S' = \mathcal{X} \setminus S$ refers to the complement of $S$. 

A similar notion called the symmetrised Cheeger's constant is defined in \cite{Montenegro} as 
\begin{align*}
    \phi^*(P) := \min_{S;~0 < \pi(S) < 1} \dfrac{\sum_{x \in S;\,y \in S'} \pi(x)P(x,y)}{\pi(S)\pi(S')},
\end{align*}
with 
$$U^* \in \argmin_{S;~0 < \pi(S) < 1} \frac{\sum_{x \in S;\,y \in S'} \pi(x)P(x,y)}{\pi(S)\pi(S')}$$
as a symmetrised Cheeger's cut of $P$.

Next, for any self-adjoint matrix $M \in \mathbb{R}^{n \times n}$, we use $(\lambda_i = \lambda_i(M))_{i=1}^n$ to denote the eigenvalues in non-increasing order counted with multiplicities. That is, 
\begin{align*}
    \lambda_1(M) \geq \lambda_2(M) \geq \ldots \geq \lambda_n(M).
\end{align*}
For $P \in \mathcal{L}(\pi)$, we write $\gamma(P) := 1 - \lambda_2(P)$ to be the right spectral gap of $P$.

Further, we define $\lambda^*(M) = \max\{|\lambda_2(M)|, |\lambda_n(M)|\}$ to be the second-largest eigenvalue in modulus (SLEM). For $P \in \mathcal{L}(\pi)$, the absolute spectral gap is then given by $\gamma^*(P) = 1 - \lambda^*(P).$

The classical Cheeger's inequality relates the Cheeger's constant with the spectral gap of a ergodic sampler $P \in \mathcal{L}(\pi)$ by 
\begin{align}\label{eq:Cheegerineq}
    \dfrac{\Phi^{*2}(P)}{2} \leq \gamma(P) \leq 2 \Phi^*(P).
\end{align}
A proof can be found in \cite{roch_mdp_2024}, Theorem 5.3.5. 

Another immediate inequality concerning the Cheeger's constant and its symmetric counterpart is given by 
\begin{align*}
    \phi^*(P) \leq 2 \Phi^*(P).
\end{align*}




\section{Construction of a lifted kernel $Q_\alpha$ and $A_\alpha$ as its projection} \label{sec: lift}
In this section, we discuss the construction and interpretation of the mixture $1/2(P+G)$, which will form the basis of our subsequent analysis. 

For any given distribution $\pi \in \mathcal{P}(\mathcal{X})$, consider some sampler $P \in \mathcal{S}(\pi)$ and the Gibbs kernel $G$ induced by the partition $\mathcal{X} = \bigsqcup_{i=1}^k \mathcal{O}_i$.

For $0 \leq \alpha \leq 1$, we define the additive kernel 
$$A_\alpha = A_\alpha(\mathcal{O}_1,\ldots,\mathcal{O}_k):= \alpha P + (1-\alpha)G,$$
and for the case $\alpha = 1/2$, we shall simply write $A = A_{1/2}$. It follows that $A$ is also $\pi$-stationary. In the special case of $k = n$ and $\mathcal{O}_i = \{i\}$ for $i \in \llbracket n \rrbracket$, we see that $G = I$ and hence $A_{1/2}$ is the lazified version of $P$.

We now shift our attention to the lifted space $\widetilde{\mathcal{X}} := \mathcal{X} \times \{-1, +1\}$. Define a Markov chain $Q_\alpha$ on $\widetilde{\mathcal{X}}$ as follows. From the current state $(x,i) \in \widetilde{\mathcal{X}}$:
\begin{enumerate}
    \item Sample the auxiliary variable:
    $$\sigma \sim R := \begin{cases}
    +1,& \mathrm{with\ probability}\ 1-\alpha, \\
    -1,& \mathrm{with\ probability}\ \alpha,
    \end{cases}$$
    where $0 \leq \alpha \leq 1$ is some fixed parameter.
    \item Update the $\mathcal{X}$-coordinate:
    \begin{align*}
        y \sim \begin{cases}
            G(x, \cdot),& \sigma = +1,\\
            P(x, \cdot),& \sigma = -1. 
        \end{cases}
    \end{align*}
    \item Set $j = \sigma$, that is, we move from $(x,i)$ to $(y,j)$.
\end{enumerate}

One can verify that the transition matrix $Q_\alpha$ is thus defined to be 
\begin{align}
    Q_\alpha((x,i), (y,j)) = \begin{cases}
        (1-\alpha) G(x,y),& j=+1,\\
        \alpha P(x,y),& j=-1.
    \end{cases}
\end{align}

\begin{prop} \label{prop:Q stationary}
The Markov chain $Q_\alpha$ admits $\widetilde{\pi}_\alpha := \pi \otimes R$ as its stationary distribution.
\end{prop}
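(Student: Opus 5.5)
The plan is to verify directly the global balance equation $\widetilde{\pi}_\alpha Q_\alpha = \widetilde{\pi}_\alpha$ on the lifted space $\widetilde{\mathcal{X}}$. Here $\widetilde{\pi}_\alpha(x,i) = \pi(x)R(i)$, with $R(+1) = 1-\alpha$ and $R(-1) = \alpha$. The key structural observation is that $Q_\alpha((x,i),(y,j))$ does not depend on the current auxiliary coordinate $i$. As a result, the sum over $i$ factors out and contributes $\sum_i R(i) = 1$.

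Fix a target state $(y,j)$. We write
\begin{align*}
\sum_{(x,i)\in\widetilde{\mathcal{X}}} \pi(x)R(i)\,Q_\alpha((x,i),(y,j)) = \Big(\sum_{i\in\{-1,+1\}} R(i)\Big)\sum_{x\in\mathcal{X}} \pi(x)\,Q_\alpha((x,\cdot),(y,j)).
\end{align*}
The first factor equals $1$, so we treat the two cases separately.

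If $j=+1$, the right-hand side is $(1-\alpha)\sum_x \pi(x)G(x,y) = (1-\alpha)\pi(y)$. This uses $\pi G = \pi$, which follows from $G \in \mathcal{L}(\pi)$ (detailed balance summed over $x$), or directly from $\sum_{x\in\mathcal{O}(y)} \pi(x)\pi(y)/\pi(\mathcal{O}(y)) = \pi(y)$.

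If $j=-1$, the right-hand side is $\alpha\sum_x \pi(x)P(x,y) = \alpha\pi(y)$, using $P\in\mathcal{S}(\pi)$.

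In both cases the result equals $\pi(y)R(j) = \widetilde{\pi}_\alpha(y,j)$, which establishes stationarity. As a sanity check, $\widetilde{\pi}_\alpha$ is a probability distribution, being a product of probability distributions.

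There is no real obstacle here. The only points needing care are the stationarity of $G$, which I will verify explicitly from its definition, and the edge cases $\alpha\in\{0,1\}$. In those cases $R$ puts zero mass on one sign, but the computation above goes through verbatim.
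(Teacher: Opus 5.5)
Your proposal is correct and follows the paper's proof essentially step by step. Like the paper, you factor out $\sum_i R(i)=1$ using the $i$-independence of $Q_\alpha$, then split on $j=\pm1$ and invoke $\pi G=\pi$ and $\pi P=\pi$; your explicit verification of $\pi G=\pi$ and your remark on $\alpha\in\{0,1\}$ are small additions that the paper leaves implicit.
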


\begin{proof}
For any $(y,j) \in \widetilde{\mathcal{X}}$, 
\begin{align*}
    \sum_{x,i} \widetilde{\pi}_\alpha(x,i) Q_\alpha((x,i),(y,j)) &= \sum_x \pi(x) \left(\sum_i R(i) \right) Q_\alpha((x,\cdot), (y,j))\\
    &= \sum_x \pi(x) Q_\alpha((x,\cdot), (y,j)),
\end{align*}
where in the first equality we make use of the fact that $Q_\alpha((x,i), (y,j))$ is independent of $i$. Now consider the two possibilities of $j$:
$$\sum_{x,i} \widetilde{\pi}_\alpha(x,i) Q_\alpha((x,i),(y,j)) = \begin{cases}
    (1-\alpha)\sum_x \pi(x) G(x,y),& j = +1,\\
    \alpha \sum_x \pi(x) P(x,y),& j = -1.
\end{cases}$$
Since both $P, G \in \mathcal{S}(\pi)$, we get 
$$\sum_{x,i} \widetilde{\pi}_\alpha(x,i) Q_\alpha((x,i),(y,j)) = \begin{cases}
    (1-\alpha)\pi(y),& j = +1,\\
    \alpha \pi(y),& j = -1,
\end{cases}$$
which is equivalent to $\widetilde{\pi}_\alpha(y,j).$
\end{proof}

Now consider the marginal chain of $Q_\alpha$ which acts only on $\mathcal{X}$, in which we denote by $Q^{(1)}_\alpha$ (see, e.g., \cite{choi2026geometryfactorizationmultivariatemarkov} for related lifted constructions and marginal chains). Given that $Q_\alpha((x,i), (y,j))$ is independent of $i$, 

\begin{align*}
    Q^{(1)}_\alpha(x,y) := \sum_{j \in \{-1, +1\}} Q_\alpha((x,i), (y,j)) = A_\alpha(x,y)
\end{align*}

In the case where $\alpha = 1/2$, the distribution $R$ reduces to the uniform distribution on $\{+1, -1\}$. We shall denote $Q = Q_{1/2}$, and it follows that $Q^{(1)} = A$. 

Hence, one can view the additive mixture $A_\alpha = \alpha P + (1-\alpha)G$ as the projection of the lifted kernel $Q_\alpha$ onto $\mathcal{X}$. In particular, the randomness in selecting between $P$ and $G$ at each step of $A_\alpha$ can be interpreted as arising from an auxiliary variable in the augmented space $\widetilde{\mathcal{X}}$.

\section{Frobenius norm} \label{sec: frobnorm}
We begin by introducing the Frobenius norm with respect to a distribution $\pi \in \mathcal{P}(\mathcal{X})$. For any real $n \times n$ matrices $M,N$, we define the Frobenius inner product with respect to $\pi$ as 
$$\langle M,N \rangle_{F,\pi} := \Tr(M^*N),$$
where $\Tr(M)$ is the trace of $M$. Section 2 of \cite{submodular} verifies that $\langle \cdot,\cdot \rangle_{F,\pi}$ is in fact, an inner product. 

The induced Frobenius norm is thus given by $\| M\|_{F, \pi}^2 := \langle M,M\rangle_{F,\pi}$.

The Frobenius norm may be understood as measuring discrepancy from stationarity in an averaged sense over all directions, in contrast to the spectral norm, which measures the discrepancy in the worst direction. Results concerning the spectral norm of $A_\alpha$ can be found in Section 2.5 of \cite{GPG}. In the present work, the Frobenius objective is particularly useful because, as shown below, the optimisation of Frobenius distance between $A_\alpha$ and $\Pi$ can be related to the symmetrised Cheeger's constant in the two-block case.

We first relate the compositions $GP$ and $PG$ to the projection chain $\overline{P}$ via trace identities.
\begin{lemma} \label{lem: Tr(GP)}
    Let $P \in \mathcal{S}(\pi)$ and $G$ be the Gibbs kernel induced by the partition $\mathcal{X} = \bigsqcup_{i=1}^k \mathcal{O}_i$. Then 
    $$\Tr(GP) = \Tr(PG) = \Tr(\overline{P}).$$
\end{lemma}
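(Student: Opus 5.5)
First, $\Tr(GP)=\Tr(PG)$ follows from cyclicity of the trace for square matrices, so only $\Tr(PG)=\Tr(\overline{P})$ needs an argument. I will compute it directly from the entries of $G$ and the definition \eqref{eq:Pbar} of the projection chain.

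Write
\[
\Tr(PG)=\sum_{x\in\mathcal{X}}\sum_{y\in\mathcal{X}}P(x,y)G(y,x).
\]
By the definition of the Gibbs kernel, $G(y,x)=\pi(x)/\pi(\mathcal{O}(y))$ when $x\in\mathcal{O}(y)$, that is, when $x$ and $y$ lie in the same block, and $G(y,x)=0$ otherwise. Grouping the double sum by blocks gives
\[
\Tr(PG)=\sum_{i=1}^k\sum_{x\in\mathcal{O}_i}\sum_{y\in\mathcal{O}_i}P(x,y)\frac{\pi(x)}{\pi(\mathcal{O}_i)}
=\sum_{i=1}^k\frac{1}{\pi(\mathcal{O}_i)}\sum_{x,y\in\mathcal{O}_i}\pi(x)P(x,y).
\]
By \eqref{eq:Pbar}, the inner expression is exactly $\overline{P}(i,i)$, so the right-hand side equals $\sum_i\overline{P}(i,i)=\Tr(\overline{P})$.

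Stationarity of $P$ is not actually needed for this identity; it only guarantees that $\overline{P}$ is a genuine Markov kernel with stationary distribution $\overline{\pi}$. As a cross-check, one could also compute $\Tr(GP)=\sum_{x,y}G(x,y)P(y,x)$ directly. This gives $\sum_i \pi(\mathcal{O}_i)^{-1}\sum_{x,y\in\mathcal{O}_i}\pi(y)P(y,x)$, which is again $\Tr(\overline{P})$ after relabelling $x\leftrightarrow y$.

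There is no real obstacle. The only point needing care is using the correct index order in $G(y,x)$, so that the weight is $\pi(x)$ (the row variable of $P$) and matches $\pi(x)P(x,y)$ in \eqref{eq:Pbar}.
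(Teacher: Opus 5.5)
Your proposal is correct and follows essentially the same route as the paper: cyclicity of the trace for $\Tr(GP)=\Tr(PG)$, then a direct entrywise expansion of $\Tr(PG)$ grouped by blocks to recover $\sum_i \overline{P}(i,i)$. Your side remark that stationarity is not needed for this identity is also right. More precisely, $\overline{P}$ is row-stochastic for any $P$, and stationarity only gives $\overline{\pi}$ as its stationary distribution.
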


\begin{proof}
The first equality follows directly from the cyclic property of trace. For the second equality, 
    \begin{align*}
        \Tr(PG) &= \sum_{x\in \mathcal{X}} PG(x,x) \\
        &= \sum_{i=1}^k \frac{1}{\pi(\mathcal{O}_i)} \sum_{x,y \in \mathcal{O}_i} \pi(x)P(x,y) \\
        &= \sum_{i=1}^k \overline{P}(i,i)\\
        &= \Tr(\overline{P}).
    \end{align*}
\end{proof}

With the above lemma, we can relate the Frobenius distance between $A$ and $\Pi$ to the trace of $\overline{P}$.

\begin{prop} \label{prop: frob A_alpha}
     Let $P \in \mathcal{L}(\pi)$, $G$ be the Gibbs kernel induced by the partition $\mathcal{X} = \bigsqcup_{i=1}^k \mathcal{O}_i$ and recall that we define $A_\alpha = \alpha P + (1-\alpha)G.$ Then for $\alpha \in [0,1]$,
     $$\| A_\alpha - \Pi\|_{F, \pi}^2 = 2\alpha(1-\alpha)\Tr(\overline{P}) + \alpha^2 \Tr(P^2) + (1-\alpha)^2 k - 1.$$
\end{prop}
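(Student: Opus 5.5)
The plan is to expand the squared norm bilinearly and evaluate each trace with the algebraic identities available for $P$, $G$ and $\Pi$. Write $A_\alpha - \Pi = \alpha(P-\Pi) + (1-\alpha)(G-\Pi)$, using $\alpha\Pi + (1-\alpha)\Pi = \Pi$. Since $P \in \mathcal{L}(\pi)$ and $G \in \mathcal{L}(\pi)$, and $\Pi$ is also $\pi$-reversible because $\pi(x)\pi(y)$ is symmetric, all three operators are self-adjoint in $\ell^2(\pi)$. Hence $(A_\alpha-\Pi)^* = A_\alpha - \Pi$, and we obtain
\begin{align*}
\|A_\alpha-\Pi\|_{F,\pi}^2 = \alpha^2\Tr\big((P-\Pi)^2\big) + 2\alpha(1-\alpha)\Tr\big((P-\Pi)(G-\Pi)\big) + (1-\alpha)^2\Tr\big((G-\Pi)^2\big),
\end{align*}
where the two cross terms have been combined by the cyclic property of the trace.

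Next we simplify each product using stationarity. Since $\Pi$ has all rows equal to $\pi$, we have $M\Pi = \Pi$ for any stochastic $M$, and $\Pi M = \Pi$ whenever $\pi M = \pi$. Both $P$ and $G$ are stochastic and $\pi$-stationary, and $\Pi^2 = \Pi$. It follows that $(P-\Pi)^2 = P^2 - \Pi$, that $(G-\Pi)^2 = G^2 - \Pi = G - \Pi$ by idempotence of $G$, and that $(P-\Pi)(G-\Pi) = PG - \Pi$. Moreover $\Tr(\Pi) = \sum_x \pi(x) = 1$.

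It remains to compute the individual traces. Lemma \ref{lem: Tr(GP)} gives $\Tr(PG) = \Tr(\overline{P})$. For the Gibbs kernel, $\Tr(G) = \sum_x \pi(x)/\pi(\mathcal{O}(x)) = \sum_{i=1}^k 1 = k$. Substituting these values yields
\begin{align*}
\alpha^2(\Tr(P^2)-1) + 2\alpha(1-\alpha)(\Tr(\overline{P})-1) + (1-\alpha)^2(k-1).
\end{align*}
The constant terms sum to $-(\alpha^2 + 2\alpha(1-\alpha) + (1-\alpha)^2) = -1$, which gives the stated formula.

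No step is a real obstacle. The only point needing care is the justification of self-adjointness, which lets us write $\Tr(M^*M) = \Tr(M^2)$ and merge the cross terms. Reversibility of $P$ is exactly what supplies this, and it is also why the statement assumes $P \in \mathcal{L}(\pi)$ rather than merely $P \in \mathcal{S}(\pi)$.
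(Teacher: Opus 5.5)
Your proof is correct and is essentially the paper's argument: the paper also uses reversibility of $P$ and $G$ to write $\|A_\alpha-\Pi\|_{F,\pi}^2 = \Tr((A_\alpha-\Pi)^2) = \Tr(A_\alpha^2) - 1$, then expands $A_\alpha^2$ and uses Lemma \ref{lem: Tr(GP)}, $G^2 = G$ and $\Tr(G) = k$. The only difference is that you centre $P$ and $G$ separately before expanding, whereas the paper subtracts $\Pi$ once and expands $A_\alpha^2$; this is just a reordering of the same computation.
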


\begin{proof}
    Since both $P$ and $G$ are $\pi$-reversible, so will $A_\alpha$. Hence 
    \begin{align*}
        \| A_\alpha - \Pi\|_{F, \pi}^2 = \Tr((A_\alpha-\Pi)^2) = \Tr(A_\alpha^2) - 1. 
    \end{align*}
    Expanding out $A_\alpha^2$, we then obtain
    \begin{align*}
        \Tr(A_\alpha^2) &= \alpha^2\Tr(P^2) + 2\alpha(1-\alpha)\Tr(PG) + (1-\alpha)^2\Tr(G)\\
        &= \alpha^2 \Tr(P^2) + 2\alpha(1-\alpha)\Tr(\overline{P}) + (1-\alpha)^2k,
    \end{align*}
    where we use the results of Lemma \ref{lem: Tr(GP)}, and the fact that $\Tr(G) = k$.
\end{proof}

Specifically for the case $A = A_{1/2}$, we have the precise result:

\begin{corollary} \label{cor: frob and pbar}
    Under the same settings as Proposition \ref{prop: frob A_alpha},
    $$\| A - \Pi\|_{F, \pi}^2 = \frac{1}{4} \Tr(P^2) + \frac{1}{2}\Tr(\overline{P}) + \frac{k}{4} - 1.$$
\end{corollary}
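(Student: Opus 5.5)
This is the specialisation of Proposition \ref{prop: frob A_alpha} to $\alpha = 1/2$, so the plan is to substitute $\alpha = 1/2$ into that formula and simplify the coefficients. The hypotheses are the same, namely $P \in \mathcal{L}(\pi)$ and $G$ the Gibbs kernel of the partition $(\mathcal{O}_i)_{i=1}^k$. Hence the identity
$$\| A_\alpha - \Pi\|_{F, \pi}^2 = 2\alpha(1-\alpha)\Tr(\overline{P}) + \alpha^2 \Tr(P^2) + (1-\alpha)^2 k - 1$$
holds for every $\alpha \in [0,1]$, and in particular for $\alpha = 1/2$, where $A_{1/2} = A$ by definition.

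At $\alpha = 1/2$ the coefficients are
$$2\alpha(1-\alpha) = \tfrac{1}{2}, \qquad \alpha^2 = \tfrac{1}{4}, \qquad (1-\alpha)^2 = \tfrac14.$$
Substituting and reordering the terms gives
$$\| A - \Pi\|_{F,\pi}^2 = \tfrac14 \Tr(P^2) + \tfrac12 \Tr(\overline{P}) + \tfrac{k}{4} - 1,$$
which is the claim.

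There is no real obstacle here. The only thing to check is that the hypotheses of Proposition \ref{prop: frob A_alpha} are satisfied. In particular, reversibility of $P$ is what justifies $\|A-\Pi\|_{F,\pi}^2 = \Tr(A^2) - 1$ inside that proof, and it is assumed here.

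For a self-contained check, one can redo the computation directly. Expand $A^2 = \tfrac14(P^2 + PG + GP + G^2)$. Then use $G^2 = G$ with $\Tr(G) = k$, and Lemma \ref{lem: Tr(GP)} for $\Tr(PG) = \Tr(GP) = \Tr(\overline{P})$. This yields the same expression.
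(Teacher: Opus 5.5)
Your proposal is correct and matches the paper, which obtains this corollary by setting $\alpha = 1/2$ in Proposition \ref{prop: frob A_alpha}. Your coefficients $2\alpha(1-\alpha)=\tfrac12$ and $\alpha^2=(1-\alpha)^2=\tfrac14$ are right, and the direct re-expansion of $A^2$ is a valid independent check.
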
 

\subsection{Bound and decay rate of Frobenius objective}
This subsection aims to derive an explicit decay rate of the Frobenius objective in terms of the absolute spectral gap of $P.$ 

\begin{lemma} \label{lem: SLEM A_alpha}
    Let $\alpha \in (0,1)$, $P \in \mathcal{L}(\pi)$ and $G$ be a Gibbs kernel. Then 
    $$\lambda^*(A_\alpha) = \lambda^*(\alpha P + (1-\alpha) G) \leq 1 - \alpha\gamma^*(P).$$
\end{lemma}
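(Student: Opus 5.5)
The plan is a variational argument on the zero-mean subspace $\ell^2_0(\pi)$, using self-adjointness and a triangle inequality for operator norms. Since $P,G\in\mathcal{L}(\pi)$, the mixture $A_\alpha$ is self-adjoint on $\ell^2(\pi)$. Each of $P$, $G$ and $A_\alpha$ fixes constants, and by $\pi$-stationarity each maps $\ell^2_0(\pi)$ into itself.

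First I will identify the SLEM of a $\pi$-reversible kernel $M$ as an operator norm:
$$\lambda^*(M)=\|M|_{\ell^2_0(\pi)}\|_{\pi}=\sup_{f\in\ell^2_0(\pi),\,f\neq 0}\frac{|\langle Mf,f\rangle_\pi|}{\|f\|_\pi^2}.$$
This holds because the constant function is an eigenfunction for the eigenvalue $1=\lambda_1$. The orthogonal complement of the constants is invariant under $M$, and the eigenvalues $\lambda_2,\ldots,\lambda_n$ belong to that complement. The norm of a self-adjoint operator equals its largest eigenvalue in modulus.

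Next, for $f\in\ell^2_0(\pi)$ I apply the triangle inequality:
$$|\langle A_\alpha f,f\rangle_\pi|\le \alpha|\langle Pf,f\rangle_\pi|+(1-\alpha)|\langle Gf,f\rangle_\pi|\le \alpha\lambda^*(P)\|f\|_\pi^2+(1-\alpha)\|f\|_\pi^2.$$
For the $G$ term I use that $G$ is a self-adjoint idempotent, i.e. an orthogonal projection in $\ell^2(\pi)$. Hence $0\le\langle Gf,f\rangle_\pi=\|Gf\|_\pi^2\le\|f\|_\pi^2$. Taking the supremum gives
$$\lambda^*(A_\alpha)\le \alpha(1-\gamma^*(P))+1-\alpha=1-\alpha\gamma^*(P).$$

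The main point needing care is the first step: the SLEM must be computed on the invariant complement of the constants. I must check that $\lambda_1=1$ is indeed associated with the constant function there. If $P$ or $A_\alpha$ is reducible, the eigenvalue $1$ may have multiplicity greater than one. In that case $\lambda_2=1$, and the bound still holds because $\lambda^*(P)=1$ makes it read $\lambda^*(A_\alpha)\le 1$, which is trivial. So the argument goes through with the definition of $\lambda^*$ via ordered eigenvalues counted with multiplicity.
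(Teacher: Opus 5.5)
Your proof is correct, but it follows a different route from the paper's. The paper works with the ordered eigenvalues and applies Weyl's inequality in two cases. If $\lambda^*(A_\alpha)=\lambda_2(A_\alpha)\ge 0$, it uses $\lambda_2(A_\alpha)\le\alpha\lambda_2(P)+(1-\alpha)\lambda_1(G)$. If $\lambda^*(A_\alpha)=-\lambda_n(A_\alpha)$, it uses $\lambda_n(A_\alpha)\ge\alpha\lambda_n(P)+(1-\alpha)\lambda_n(G)\ge\alpha\lambda_n(P)$, since the spectrum of $G$ lies in $\{0,1\}$. You instead restrict to the invariant subspace $\ell^2_0(\pi)$ and identify $\lambda^*$ with the norm (numerical radius) of the restricted self-adjoint operator. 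A single triangle inequality on quadratic forms then handles both ends of the spectrum at once.

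The paper's version never has to ask where the top eigenvalue lives, because Weyl's indexing and $\lambda_1(G)=1$ absorb it. It also records one-sided facts along the way, such as $\gamma(A_\alpha)\ge\alpha\gamma(P)$. Your version avoids the case split and shows plainly that only $|\langle Gf,f\rangle_\pi|\le\|f\|_\pi^2$ is used, so $G$ could be any $\pi$-reversible kernel. It also transfers verbatim to operator-norm formulations on $\ell^2_0(\pi)$.

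Your closing caveat about reducibility is not needed. Because $\lambda_1=1$ is the largest eigenvalue, removing one copy of it (the constants) leaves exactly $\lambda_2,\ldots,\lambda_n$ on $\ell^2_0(\pi)$, whatever the multiplicity of $1$. So your identification of $\lambda^*$ holds in general. If you keep the caveat, add that reducibility of $A_\alpha$ forces reducibility of $P$, since $A_\alpha\ge\alpha P$ entrywise; without this, $\lambda^*(P)=1$ is not justified in that sub-case.
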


\begin{proof}
    Recall that the spectrum of $G \subseteq \{0,1\}.$ 
    First, suppose $\lambda^*(A_\alpha) = \lambda_2(A_\alpha) \geq 0.$ By Weyl's inequality, 
    \begin{align*}
        \lambda_2(A_\alpha) &= \lambda_2(\alpha P + (1-\alpha)G)\\
        &\leq \alpha\lambda_2(P) + (1-\alpha)\lambda_1(G)\\
        &\leq \alpha\lambda^*(P) + 1 - \alpha\\
        &= 1 - \alpha\gamma^*(P)
    \end{align*}
    Else, suppose $\lambda^*(A_\alpha) = |\lambda_n(A_\alpha)| = - \lambda_n(A_\alpha).$ Weyl's inequality gives 
    $$\lambda_n(A_\alpha) \geq \alpha\lambda_n(P) + (1-\alpha)\lambda_n(G) \geq \alpha\lambda_n(P).$$
    Then 
    $$- \lambda_n(A_\alpha) \leq -\alpha\lambda_n(P) \leq \alpha\lambda^*(P) \leq 1 - \alpha\gamma^*(P).$$
    In either case, the inequality holds. 
\end{proof}

\begin{prop} \label{prop: decay frob}
    Take any $A_\alpha = \alpha P + (1-\alpha)G$ with $G$ as the Gibbs kernel, and $P$ a $\pi$-reversible kernel. Then for any positive integer $l \geq 2$ and $\alpha \in (0,1)$,
    \begin{equation}
        \| A_\alpha^l - \Pi\|_{F,\pi}^2 \leq (1-\alpha \gamma^*(P))^{2(l-1)} \| A_\alpha - \Pi\|_{F,\pi}^2
    \end{equation}
\end{prop}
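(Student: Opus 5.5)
The plan is to diagonalise $A_\alpha$ in $\ell^2(\pi)$ and read the Frobenius norm off its spectrum. Since $P$ and $G$ are both $\pi$-reversible, $A_\alpha$ is self-adjoint on $\ell^2(\pi)$. It is also $\pi$-stationary, so $A_\alpha \Pi = \Pi A_\alpha = \Pi$ and $\Pi^2=\Pi$, and induction gives
$$A_\alpha^l - \Pi = (A_\alpha - \Pi)^l \quad \text{for all } l\ge 1.$$
Because $A_\alpha - \Pi$ is self-adjoint, we have $\|A_\alpha^l-\Pi\|_{F,\pi}^2 = \Tr((A_\alpha-\Pi)^{2l})$.

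Let $1=\lambda_1\ge \lambda_2\ge\dots\ge\lambda_n$ be the eigenvalues of $A_\alpha$. We will take a $\pi$-orthonormal eigenbasis whose first vector is the constant function $\mathbf 1$. In this basis $A_\alpha-\Pi$ has eigenvalue $0$ on $\mathbf 1$ and eigenvalues $\lambda_2,\dots,\lambda_n$ on $\ell^2_0(\pi)$. Consequently
$$\|A_\alpha^l-\Pi\|_{F,\pi}^2=\sum_{i=2}^n \lambda_i^{2l}, \qquad \|A_\alpha-\Pi\|_{F,\pi}^2=\sum_{i=2}^n\lambda_i^2 .$$
A small point needs checking here. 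If $A_\alpha$ is not irreducible, the eigenvalue $1$ may be repeated; such eigenvalues simply appear among $\lambda_2,\dots,\lambda_n$, and the identities still hold because we only remove the single direction $\mathbf 1$.

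Next we bound each term: $\lambda_i^{2l} = \lambda_i^{2(l-1)}\lambda_i^2 \le (\lambda^*(A_\alpha))^{2(l-1)}\lambda_i^2$ for $i\ge 2$. Lemma \ref{lem: SLEM A_alpha} gives $\lambda^*(A_\alpha)\le 1-\alpha\gamma^*(P)$, and this quantity is nonnegative since $\gamma^*(P)\le 1$ and $\alpha<1$. Raising both sides to the power $2(l-1)$ preserves the inequality. Summing over $i\ge2$ then yields the claim.

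The main obstacle is that Lemma \ref{lem: SLEM A_alpha} is phrased via $\lambda_2,\lambda_n$, but what we actually need is $|\lambda_i|\le 1-\alpha\gamma^*(P)$ for every $i\ge2$. This follows from the lemma because every such $\lambda_i$ lies in $[\lambda_n,\lambda_2]$. It does, however, require that the eigenvalue $1$ of $A_\alpha$ be simple whenever $\gamma^*(P)>0$. That is consistent with the lemma's Weyl bound $\lambda_2(A_\alpha)\le 1-\alpha\gamma^*(P)<1$. If $\gamma^*(P)=0$, the bound reads $|\lambda_i|\le 1$, which is trivially true. So no extra case analysis is needed.
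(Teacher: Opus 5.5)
Your proof is correct and follows essentially the same route as the paper: write $\|A_\alpha^l-\Pi\|_{F,\pi}^2=\sum_{i\ge 2}\lambda_i(A_\alpha)^{2l}$, factor out $\lambda^*(A_\alpha)^{2(l-1)}$ to recover $\sum_{i\ge 2}\lambda_i(A_\alpha)^2=\|A_\alpha-\Pi\|_{F,\pi}^2$, and finish with Lemma \ref{lem: SLEM A_alpha}. Your closing concern about the multiplicity of the eigenvalue $1$ is not an extra requirement, since $|\lambda_i|\le\lambda^*(A_\alpha)$ for all $i\ge 2$ holds by definition of the SLEM and the lemma bounds $\lambda^*(A_\alpha)$ directly (a repeated eigenvalue $1$ would sit at $\lambda_2$ and force $\gamma^*(P)=0$).
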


\begin{proof} 
    It follows from $\pi$-stationarity of $A_\alpha$, and the definition of the Frobenius norm that
    \begin{align*}
        \| A_\alpha^l - \Pi\|_{F,\pi}^2 &= \Tr((A_\alpha^l - \Pi)^2) = \Tr(A_\alpha^{2l}) - 1.
    \end{align*}

    Since 
    $$\Tr(A_\alpha^{2l}) = \sum_{i=1}^n \lambda_i(A_\alpha^{2l})$$
    and 
    $$\lambda_1(A_\alpha^{2l}) = 1,$$
    the objective can be written as 
    $$\| A_\alpha^l - \Pi\|_{F,\pi}^2 = \sum_{i=2}^n \lambda_i(A_\alpha^{2l}) = \sum_{i=2}^n \lambda_i(A_\alpha)^{2l}.$$
    One then bounds the eigenvalues by the SLEM, giving us 
    \begin{align*}
        \| A_\alpha^l - \Pi\|_{F,\pi}^2 \leq \lambda^*(A_\alpha)^{2(l-1)} \sum_{i=2}^n \lambda_i(A_\alpha)^2.
    \end{align*}
    Finally, notice that $\sum_{i=2}^n \lambda_i(A_\alpha)^2 = \| A_\alpha - \Pi\|_{F,\pi}^2$ and using the upper bound of $\lambda^*(A_\alpha)$ shown in Lemma \ref{lem: SLEM A_alpha}, we complete the proof.
    
\end{proof}

The results of Proposition \ref{prop: decay frob} thus show that with every increasing step of the kernel $A_\alpha$, the Frobenius distance decreases with a geometric rate of $(1-\alpha\gamma^*(P))^2 < 1$ when $P$ is assumed to be ergodic. We can further obtain a crude upper bound for $\| A_\alpha^l - \Pi\|_{F,\pi}^2$.

\begin{corollary}
    Under the same setting as Proposition \ref{prop: decay frob}, 
    $$\| A_\alpha^l - \Pi\|_{F,\pi}^2 \leq (n-1)(1-\alpha\gamma^*(P))^{2l},$$
    where $n = |\mathcal{X}|.$
\end{corollary}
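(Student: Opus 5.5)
We would go back to the spectral representation from the proof of Proposition \ref{prop: decay frob}, rather than using its final inequality. Since $A_\alpha$ is $\pi$-reversible and $\pi$-stationary, that proof shows
$$\| A_\alpha^l - \Pi\|_{F,\pi}^2 = \sum_{i=2}^n \lambda_i(A_\alpha)^{2l}.$$
This is a sum of exactly $n-1$ nonnegative terms. The first term, $\lambda_1(A_\alpha)^{2l} = 1$, has already been removed because $\Tr(\Pi) = 1$ cancels it.

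Next we bound each remaining term by the SLEM: for every $i \geq 2$, $|\lambda_i(A_\alpha)| \leq \lambda^*(A_\alpha)$. Summing the $n-1$ terms gives
$$\| A_\alpha^l - \Pi\|_{F,\pi}^2 \leq (n-1)\,\lambda^*(A_\alpha)^{2l}.$$

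Finally, Lemma \ref{lem: SLEM A_alpha} gives $0 \leq \lambda^*(A_\alpha) \leq 1 - \alpha\gamma^*(P)$. Since $\lambda^*(P) \leq 1$, we have $\gamma^*(P) \in [0,1]$, and with $\alpha \in (0,1)$ this makes $1 - \alpha\gamma^*(P)$ nonnegative. Monotonicity of $t \mapsto t^{2l}$ on $[0,\infty)$ then yields the bound $(n-1)(1-\alpha\gamma^*(P))^{2l}$.

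There is no real obstacle here. The only care needed is to bound by the SLEM raised to the full power $2l$, not $2(l-1)$, and to check that $1-\alpha\gamma^*(P)\geq 0$ so the power preserves the inequality. Alternatively, one can start from Proposition \ref{prop: decay frob} and note that $\| A_\alpha - \Pi\|_{F,\pi}^2 = \sum_{i\geq 2}\lambda_i(A_\alpha)^2 \leq (n-1)\lambda^*(A_\alpha)^2 \leq (n-1)(1-\alpha\gamma^*(P))^2$, which gives the same result.
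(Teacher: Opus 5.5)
Your proof is correct and is essentially the paper's argument: the paper combines Proposition~\ref{prop: decay frob} with $\sum_{i=2}^n \lambda_i(A_\alpha)^2 \leq (n-1)\lambda^*(A_\alpha)^2$ and Lemma~\ref{lem: SLEM A_alpha}, which is exactly your alternative route. Your main route applies the same SLEM bound directly at power $2l$ to the spectral sum, so it also covers $l=1$ without going through Proposition~\ref{prop: decay frob}, which is stated only for $l \geq 2$.
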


\begin{proof}
    The result follows directly from Lemma \ref{lem: SLEM A_alpha} and Proposition \ref{prop: decay frob}, noting that 
    $$\sum_{i=2}^n \lambda_i(A_\alpha)^2 \leq \sum_{i=2}^n \lambda^*(A_\alpha)^2$$
\end{proof}

We note that Proposition \ref{prop: decay frob} provides a conservative geometric decay bound for any choice of partition, rather than a uniform improvement over $P$. Indeed, since $0<\alpha<1$, the bound $(1-\alpha\gamma^*(P))^2$ is weaker than the corresponding bound
$(1-\gamma^*(P))^2$ for $P$. On the other hand, comparing Proposition \ref{prop: frob A_alpha} with $\|P-\Pi\|_{F,\pi}^2=\Tr(P^2)-1$ shows that the Frobenius distance of $A_\alpha$ to $\Pi$ depends on the choice of partition. This motivates the subsequent optimisation over partitions.

\subsection{Frobenius optimisation}

It is clear from Proposition \ref{prop: frob A_alpha} that for a fixed sampler $P \in \mathcal{L}(\pi)$, a fixed choice of $\alpha \in (0,1)$, and a fixed number of orbits $k$, the choice of partition will only affect the Frobenius distance via the term $\Tr(\overline{P})$. Hence, the minimisation problem reduces to 
$$\argmin_{\mathcal{O}_1,\ldots,\mathcal{O}_k} \| A_\alpha - \Pi\|_{F, \pi}^2 = \argmin_{\mathcal{O}_1,\ldots,\mathcal{O}_k} \Tr(\overline{P}),$$
where the minimisation is over the choice of partition of $\mathcal{X}$.

For our subsequent analysis, we shall consider only the partition $\mathcal{X} = S \sqcup S',$  given that $S \neq \emptyset$ or $\mathcal{X}$. 

Our motivation for studying two-block partitions is primarily analytical. Such partitions arise naturally when the state space is separated by a binary or thresholded summary statistic, for example positive versus negative magnetisation, low versus high energy, or an event and its complement. Moreover, as we will see shortly, the two-block case leads to a direct connection with the symmetrised Cheeger's constant and admits a submodular optimisation structure. We note, however, that two-block Gibbs updates may be computationally expensive to implement. Nonetheless, the two-block case provides an analytically tractable setting for understanding structural features of partitions that are favourable for the Frobenius objective.

From here on, whenever we study two-block partitions $\mathcal X=S\sqcup S'$, we write $A_\alpha(S):=A_\alpha(S,S')$ and $A(S):=A(S,S')$ to stress the dependence on $S$, with $S'=\mathcal X\setminus S$ suppressed from the notation. The notation $A_\alpha$ will continue to refer to the additive kernel associated with an arbitrary $k$-block partition.

\begin{prop} \label{prop: Tr(Pbar)}
    Let $P \in \mathcal{L}(\pi),$ and consider the projection chain $\overline{P}$ induced by the partition $\mathcal{X} = S \sqcup S', S \neq \emptyset, \mathcal{X}$. Then 
    $$\Tr(\overline{P}) = 2 - \frac{1}{\pi(S)\pi(S')}\sum_{\substack{x \in S\\ y \in S'}} \pi(x) P(x,y).$$
\end{prop}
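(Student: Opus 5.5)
The plan is a direct computation from the definition \eqref{eq:Pbar} of the projection chain. For the two-block partition $\mathcal{X} = S \sqcup S'$ we have $\Tr(\overline{P}) = \overline{P}(1,1) + \overline{P}(2,2)$, where block $1$ is $S$ and block $2$ is $S'$. The idea is to express each diagonal entry through the flow across the cut. Write
$$Q(S,S') := \sum_{x \in S,\, y \in S'} \pi(x)P(x,y).$$

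First, row-stochasticity of $P$ gives $\sum_{y \in \mathcal{X}} P(x,y) = 1$. Summing $\pi(x)P(x,y)$ over $x \in S$ and $y \in S$ therefore yields $\pi(S) - Q(S,S')$, so
$$\overline{P}(1,1) = 1 - \frac{Q(S,S')}{\pi(S)}.$$
The same argument gives
$$\overline{P}(2,2) = 1 - \frac{Q(S',S)}{\pi(S')}.$$

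Second, I will identify $Q(S',S)$ with $Q(S,S')$. By reversibility, $\pi(x)P(x,y) = \pi(y)P(y,x)$, and swapping summation indices gives $Q(S',S) = Q(S,S')$. Stationarity alone would also suffice here, since the standard flow-balance argument $\pi P = \pi$ gives the same identity, but reversibility is assumed anyway.

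Finally, I will add the two diagonal entries and combine them using
$$\frac{1}{\pi(S)} + \frac{1}{\pi(S')} = \frac{\pi(S) + \pi(S')}{\pi(S)\pi(S')} = \frac{1}{\pi(S)\pi(S')},$$
which holds because $\pi(S) + \pi(S') = 1$. This gives the claimed formula
$$\Tr(\overline{P}) = 2 - \frac{Q(S,S')}{\pi(S)\pi(S')}.$$

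There is no real obstacle here. The only point needing care is the symmetry of the cut flow, and the hypothesis $S \neq \emptyset, \mathcal{X}$ together with the positivity of $\pi$ ensures that all denominators are nonzero.
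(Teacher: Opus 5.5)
Your proposal is correct and follows the paper's proof essentially step for step: row-stochasticity turns each diagonal entry of $\overline{P}$ into $1$ minus the normalised cut flow, symmetry of the cut flow equates the two flows, and $\pi(S)+\pi(S')=1$ combines the denominators. Your side remark is also correct: stationarity alone gives $\sum_{x\in S',\,y\in S}\pi(x)P(x,y)=\sum_{x\in S,\,y\in S'}\pi(x)P(x,y)$, so the paper's comment that this step ``requires'' reversibility is stronger than necessary.
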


\begin{proof}
    \begin{align*}
        \Tr(\overline{P}) &= \frac{1}{\pi(S)}\sum_{x,y \in S}\pi(x)P(x,y) + \frac{1}{\pi(S')}\sum_{x,y \in S'}\pi(x)P(x,y)\\
        &= \frac{1}{\pi(S)} \bigg(\pi(S) -  \sum_{\substack{x \in S \\ y \in S'}} \pi(x) P(x,y)\bigg) + \frac{1}{\pi(S')} \bigg(\pi(S') -  \sum_{\substack{x \in S' \\ y \in S}} \pi(x) P(x,y)\bigg)\\
        &= 2 - \frac{1}{\pi(S) \pi(S')} \sum_{\substack{x \in S \\ y \in S'}} \pi(x) P(x,y).
    \end{align*}
    Note that the second to last equality requires $P$ to be $\pi$-reversible.
\end{proof}

The results of Propositions \ref{prop: frob A_alpha} and \ref{prop: Tr(Pbar)} give us the following corollaries.

\begin{corollary} \label{cor: frob and g(S)}
     For any $\pi$-reversible $P$, $S \neq \mathcal{X}, \emptyset$ and $\mathcal{X} = S \sqcup S'$, we have that for any fixed $\alpha \in (0,1),$ the non-trivial set $S \subset \mathcal{X}$ that minimises $\| A_\alpha(S) - \Pi\|_{F, \pi}^2$ maximises the function 
     \begin{equation} \label{eq: g(S)}
         g(S) = g(S,P) := \frac{1}{\pi(S) \pi(S')} \sum_{\substack{x \in S \\ y \in S'}} \pi(x) P(x,y).
     \end{equation}

     The other direction holds as well. Concretely, we have that 
     \begin{align*}
         \argmin_{S \neq \mathcal{X}, \emptyset} \|A_\alpha(S) - \Pi\|_{F, \pi}^2 &= \argmax_{S \neq \mathcal{X}, \emptyset} g(S),\\
         \argmax_{S \neq \mathcal{X}, \emptyset} \|A_\alpha(S) - \Pi\|_{F, \pi}^2 &= \argmin_{S \neq \mathcal{X}, \emptyset} g(S).
     \end{align*}
\end{corollary}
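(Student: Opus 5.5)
The plan is to combine Proposition \ref{prop: frob A_alpha} with Proposition \ref{prop: Tr(Pbar)} and obtain an explicit affine relation between the Frobenius objective and $g(S)$. For the two-block partition $\mathcal{X} = S \sqcup S'$ we have $k = 2$. Proposition \ref{prop: frob A_alpha} then gives
\begin{align*}
\|A_\alpha(S) - \Pi\|_{F,\pi}^2 = 2\alpha(1-\alpha)\Tr(\overline{P}) + \alpha^2\Tr(P^2) + 2(1-\alpha)^2 - 1 .
\end{align*}
Proposition \ref{prop: Tr(Pbar)} gives $\Tr(\overline{P}) = 2 - g(S)$. Substituting, we obtain
\begin{align*}
\|A_\alpha(S) - \Pi\|_{F,\pi}^2 = C(\alpha,P) - 2\alpha(1-\alpha)\, g(S),
\end{align*}
where $C(\alpha,P) := 4\alpha(1-\alpha) + \alpha^2\Tr(P^2) + 2(1-\alpha)^2 - 1$. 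The key observation is that $C(\alpha,P)$ depends only on $\alpha$ and $P$, not on $S$. Both earlier propositions require reversibility, and this is supplied by the hypothesis $P \in \mathcal{L}(\pi)$.

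For $\alpha \in (0,1)$ the coefficient $2\alpha(1-\alpha)$ is strictly positive. The map $t \mapsto C(\alpha,P) - 2\alpha(1-\alpha)t$ is therefore strictly decreasing on $\mathbb{R}$. Because the map is strictly monotone, it is injective and order-reversing. So $S$ minimises the objective over the finite family of non-trivial subsets exactly when $S$ maximises $g$, and $S$ maximises the objective exactly when it minimises $g$. This gives the two displayed equalities of argmin/argmax sets, and in particular both directions of the first claim. Both extrema exist because the family of non-trivial $S$ is finite and nonempty when $n \ge 2$.

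No step is a real obstacle. The one point needing care is that the strict inequality $0 < \alpha < 1$ is used: at $\alpha \in \{0,1\}$ the coefficient of $g(S)$ vanishes and the argmin would be all non-trivial subsets. The other point is that $g(S)$ is well defined, which holds because $\pi(S)\pi(S') > 0$ for non-trivial $S$ when $\pi \in \mathcal{P}(\mathcal{X})$.
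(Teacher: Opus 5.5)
Your proposal is correct and follows the paper's route: the corollary is obtained by setting $k=2$ in Proposition \ref{prop: frob A_alpha} and substituting $\Tr(\overline{P}) = 2 - g(S)$ from Proposition \ref{prop: Tr(Pbar)}. This gives $\|A_\alpha(S)-\Pi\|_{F,\pi}^2 = \alpha^2\Tr(P^2) + 1 - 2\alpha^2 - 2\alpha(1-\alpha)g(S)$ (your $C(\alpha,P)$ simplifies to exactly this constant), and the strict positivity of $2\alpha(1-\alpha)$ for $\alpha\in(0,1)$ then yields both argmin/argmax identities.
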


By noting that the set function $g$ given in \eqref{eq: g(S)} is symmetric about $S$ (i.e. $g(S) = g(S')$ for $\pi$-reversible $P$), the minimisation/maximisation of $g$ can be reduced to the space 
$$\mathcal{A} = \{S \subset \mathcal{X}: 0 < \pi(S) \leq 1/2\}.$$

\begin{corollary}
    Under the same settings as Corollary \ref{cor: frob and g(S)}, we have that the symmetrised Cheeger's minimiser corresponds to the partition which maximises the Frobenius distance $\|A_\alpha(S) - \Pi\|_{F, \pi}^2$. Equivalently, 
    $$\max_{S \neq \mathcal{X}, \emptyset} \|A_\alpha(S) - \Pi\|_{F, \pi}^2 = \alpha^2\Tr(P^2) - 2\alpha(1-\alpha)\phi^*(P) + 1 - 2\alpha^2.$$
\end{corollary}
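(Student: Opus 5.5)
The plan is to combine Proposition \ref{prop: frob A_alpha} (specialised to $k=2$) with Proposition \ref{prop: Tr(Pbar)}. This writes $\|A_\alpha(S)-\Pi\|_{F,\pi}^2$ as an explicit affine function of $g(S)$, whose minimum over non-trivial $S$ is by definition the symmetrised Cheeger constant $\phi^*(P)$.

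Concretely, take a two-block partition $\mathcal{X} = S \sqcup S'$ with $S\neq\emptyset,\mathcal{X}$, so $k=2$. Proposition \ref{prop: frob A_alpha} gives
\begin{align*}
\|A_\alpha(S)-\Pi\|_{F,\pi}^2 = 2\alpha(1-\alpha)\Tr(\overline{P}) + \alpha^2\Tr(P^2) + 2(1-\alpha)^2 - 1 .
\end{align*}
Substituting $\Tr(\overline{P}) = 2 - g(S)$ from Proposition \ref{prop: Tr(Pbar)}, this becomes
\begin{align*}
\|A_\alpha(S)-\Pi\|_{F,\pi}^2 = \alpha^2\Tr(P^2) - 2\alpha(1-\alpha) g(S) + 4\alpha(1-\alpha) + 2(1-\alpha)^2 - 1 .
\end{align*}
The constant simplifies to $4\alpha - 4\alpha^2 + 2 - 4\alpha + 2\alpha^2 - 1 = 1 - 2\alpha^2$.

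Since $\alpha\in(0,1)$, the coefficient $-2\alpha(1-\alpha)$ is strictly negative. Hence maximising the Frobenius distance over $S$ is equivalent to minimising $g(S)$, which is exactly the second identity in Corollary \ref{cor: frob and g(S)}. Comparing with the definition of $\phi^*(P)$, the minimisation in $\phi^*$ is over $0<\pi(S)<1$. Because $\pi$ is strictly positive, this set of $S$ coincides with $S\neq\emptyset,\mathcal{X}$, and the objective there is precisely $g(S)$. Therefore $\min_S g(S) = \phi^*(P)$, and every symmetrised Cheeger cut $U^*$ is a maximiser of the Frobenius distance (and conversely). Plugging $g(U^*) = \phi^*(P)$ into the affine expression yields the stated formula.

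There is no genuine obstacle here. The only points needing care are checking that the index sets in the definition of $\phi^*(P)$ and in Corollary \ref{cor: frob and g(S)} agree, which uses $\pi\in\mathcal{P}(\mathcal{X})$, and checking the constant term. Reversibility of $P$ is needed only to invoke Proposition \ref{prop: Tr(Pbar)}, and it is assumed.
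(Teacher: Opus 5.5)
Your proof is correct and takes the same route the paper intends (the paper gives no separate proof, treating the statement as an immediate consequence of Proposition \ref{prop: frob A_alpha} with $k=2$, Proposition \ref{prop: Tr(Pbar)} and Corollary \ref{cor: frob and g(S)}). You derive the affine identity $\|A_\alpha(S)-\Pi\|_{F,\pi}^2=\alpha^2\Tr(P^2)-2\alpha(1-\alpha)g(S)+1-2\alpha^2$ (the one the paper also records later), and your constant-term check and the identification of $\{S: 0<\pi(S)<1\}$ with the non-trivial subsets via $\pi\in\mathcal{P}(\mathcal{X})$ are both right.
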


The above results provide an intuitive guideline for the choice of partition. Since the symmetrised Cheeger's cut identifies a bottleneck of $P$, our results show that a favourable partition should be one that bridge across the bottlenecks of the base sampler $P$. In contrast, choosing the partition along the bottleneck itself gives the worst possible two-block partition for the Frobenius objective.




\subsection{Singleton approximation of Frobenius distance to stationarity}
While $g(S)$ can be used as to exactly optimise the Frobenius objective, the need for combinatorial optimisation implies that the problem is often intractable when $|\mathcal{X}|$ is large. Instead, we shall approximate $g(S)$ with the functional 
\begin{equation}\label{eq:h(S)}
    h(S) = h(S,P) := \frac{1}{\pi(S)} \sum_{\substack{x \in S\\ y \in S'}} \pi(x)P(x,y).
\end{equation}

By Proposition 4.8 and Lemma 4.9 of \cite{submodular}, we have that for any ergodic, $\pi$-stationary $P$, define
$$x^* = x^*(P) \in \argmax_{0 < \pi(x) < 1} \left(1-P(x,x)\right).$$

Then for $|\mathcal{X}| \geq 2$,
$$U^* = \{x^*\} \in \argmax_{S \neq \mathcal{X}, \emptyset} h(S)\quad \mathrm{and}\quad U^*\in \mathcal{A}.$$

Further, for any $S \in \mathcal{A}$, we have 
\begin{equation*}
    h(S) \leq g(S) \leq 2h(S).
\end{equation*}

With these, we now show the following approximation result.
\begin{prop}
    Consider some ergodic $P \in \mathcal{L}(\pi)$ and let $|\mathcal{X}| \geq 2$. Recall the function h introduced in \eqref{eq:h(S)}. Any solution
    $$U^* = \{x^*\} \in \argmax_{S\in \mathcal{A}} h(S)$$
    is an additive $2\alpha(1-\alpha)$-approximate minimiser of the squared Frobenius distance $\|A_\alpha(S) - \Pi\|_{F,\pi}^2$. That is, 
    $$S^* \in \argmin_{S\in \mathcal{A}} \|A_\alpha(S) - \Pi\|_{F,\pi}^2$$ 
    satisfies 
    $$ 0\leq \|A_\alpha(U^*) - \Pi\|_{F,\pi}^2 - \|A_\alpha(S^*) - \Pi\|_{F,\pi}^2 \leq 2\alpha(1-\alpha).$$
\end{prop}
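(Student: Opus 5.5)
By Proposition \ref{prop: frob A_alpha} and Proposition \ref{prop: Tr(Pbar)}, for any two-block partition $\mathcal{X} = S \sqcup S'$ we have
$$\|A_\alpha(S) - \Pi\|_{F,\pi}^2 = C - 2\alpha(1-\alpha) g(S), \qquad C := 4\alpha(1-\alpha) + \alpha^2\Tr(P^2) + 2(1-\alpha)^2 - 1,$$
where $C$ does not depend on $S$. The difference we must control is therefore
$$\|A_\alpha(U^*) - \Pi\|_{F,\pi}^2 - \|A_\alpha(S^*) - \Pi\|_{F,\pi}^2 = 2\alpha(1-\alpha)\bigl(g(S^*) - g(U^*)\bigr).$$
Here $S^*$ maximises $g$ over $\mathcal{A}$. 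By the symmetry $g(S)=g(S')$ this is the same as maximising over all nontrivial $S$, which is Corollary \ref{cor: frob and g(S)}. The lower bound $0$ is immediate, since $U^* \in \mathcal{A}$ and $S^*$ is optimal over $\mathcal{A}$. It remains to show $g(S^*) - g(U^*) \leq 1$.

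For the upper bound I use the sandwich $h(S)\le g(S)\le 2h(S)$, valid on $\mathcal{A}$, together with the optimality of $U^*$ for $h$. These give
$$g(S^*) - g(U^*) \leq 2h(S^*) - h(U^*) \leq 2h(U^*) - h(U^*) = h(U^*).$$
Now $h(U^*) = 1 - P(x^*,x^*) \leq 1$, because $h(\{x\}) = \sum_{y\neq x} P(x,y) = 1 - P(x,x)$. Hence $g(S^*) - g(U^*) \le 1$, and multiplying by $2\alpha(1-\alpha)$ gives the claimed additive bound.

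The only delicate point is bookkeeping: the sandwich and the maximality of $U^*$ must be applied on the same domain. Both $S^*$ and $U^*$ lie in $\mathcal{A}$, where the sandwich holds, and $U^*$ maximises $h$ over all nontrivial sets, hence in particular over $\mathcal{A}$. Ergodicity is used only to invoke the cited results from \cite{submodular}.
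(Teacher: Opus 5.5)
Your proposal is correct and follows the paper's proof essentially step for step. You write the Frobenius gap as $2\alpha(1-\alpha)\bigl(g(S^*)-g(U^*)\bigr)$, bound it using $h\le g\le 2h$ on $\mathcal{A}$ together with $h(S^*)\le h(U^*)$, and finish with $h(U^*)=1-P(x^*,x^*)\le 1$; your constant $C$ simplifies to $1-2\alpha^2+\alpha^2\Tr(P^2)$, matching the paper's expression.
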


\begin{proof}
    The first inequality follows from the minimality of $S^*$. For the second inequality, first note that 
    by maximality of $U^*$, $h(U^*) \geq h(S^*).$ Then
    \begin{align*}
        g(S^*) - g(U^*) &\leq 2h(S^*) - h(U^*) \leq h(U^*).
    \end{align*}
    The result follows since
    \begin{align*}
        \|A_\alpha(U^*) - \Pi\|_{F,\pi}^2 - \|A_\alpha(S^*) - \Pi\|_{F,\pi}^2 &= 2\alpha(1-\alpha)(g(S^*) - g(U^*))\\
        &\leq 2\alpha(1-\alpha)h(U^*)
    \end{align*}
    and $h(U^*) = 1-P(x^*, x^*) \leq 1.$
\end{proof}
We remark that the quantity $2\alpha(1-\alpha)$ is in fact, bounded above by the constant $1/2$.

Using the approximation $h(S)$ thus reduces the optimisation over all non-trivial subsets of $\mathcal{X}$ to a search over singletons. This effectively reduces the search from one that is exponential in $|\mathcal{X}|$ to one that is instead linear in $|\mathcal{X}|$.

Lastly, we present as a corollary for the special case where we restrict $S$ to be a singleton. 

\begin{corollary}
    For $P \in \mathcal{L}(\pi)$, under the additional constraint $|S| = 1$, 
    $$\argmin_{\substack{S \neq \mathcal{X}\\ |S| = 1}} \|A_\alpha(S) - \Pi\|^2_{F, \pi} = \argmax_{\substack{S \neq \mathcal{X}\\ |S| = 1}} g(S) = \argmax_{x \in \mathcal{X}} \frac{1-P(x,x)}{1-\pi(x)}.$$
\end{corollary}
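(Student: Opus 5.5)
The first equality is a specialisation of Corollary \ref{cor: frob and g(S)}. That corollary identifies the minimisers of $\|A_\alpha(S)-\Pi\|_{F,\pi}^2$ with the maximisers of $g(S)$ over all non-trivial $S$. It rests on the exact identity, from Propositions \ref{prop: frob A_alpha} and \ref{prop: Tr(Pbar)} with $k=2$,
\begin{equation*}
\|A_\alpha(S)-\Pi\|_{F,\pi}^2 = \alpha^2\Tr(P^2) + 4\alpha(1-\alpha) + 2(1-\alpha)^2 - 1 - 2\alpha(1-\alpha)\,g(S).
\end{equation*}
Everything except the last term is independent of $S$. Since $2\alpha(1-\alpha)>0$ for $\alpha\in(0,1)$, the map $S\mapsto\|A_\alpha(S)-\Pi\|_{F,\pi}^2$ is a strictly decreasing affine function of $g(S)$. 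This holds on any admissible family of sets, and in particular on the singletons $\{x\}$. The condition $S\neq\mathcal{X}$ just requires $|\mathcal{X}|\ge 2$, so that $\pi(x)<1$. Hence the argmin over singletons equals the argmax of $g$ over singletons.

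For the second equality, I would compute $g(\{x\})$ directly. Take $S=\{x\}$, so $\pi(S)=\pi(x)$ and $\pi(S')=1-\pi(x)$. Then
\begin{equation*}
\sum_{y\in S'}\pi(x)P(x,y)=\pi(x)\sum_{y\neq x}P(x,y)=\pi(x)\bigl(1-P(x,x)\bigr),
\end{equation*}
because the row of $P$ sums to one. Dividing by $\pi(x)(1-\pi(x))$ gives
\begin{equation*}
g(\{x\})=\frac{1-P(x,x)}{1-\pi(x)}.
\end{equation*}
Maximising over singletons $\{x\}$ is therefore the same as maximising this ratio over $x\in\mathcal{X}$. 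Argmax sets correspond under the bijection $x\leftrightarrow\{x\}$.

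There is no serious obstacle. The only points to check are that $\alpha\in(0,1)$ is in force, so that the sign flip between argmin and argmax holds; if $\alpha\in\{0,1\}$ the objective does not depend on $S$ and both sides would be all singletons, so the statement should be read under the standing assumption $\alpha\in(0,1)$. One should also note that reversibility is used only through Proposition \ref{prop: Tr(Pbar)}.
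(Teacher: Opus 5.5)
Your proof is correct and is exactly the argument the paper leaves implicit: restrict Corollary \ref{cor: frob and g(S)} to singletons, using $\|A_\alpha(S)-\Pi\|_{F,\pi}^2=\alpha^2\Tr(P^2)+1-2\alpha^2-2\alpha(1-\alpha)g(S)$ with $\alpha\in(0,1)$, and use the row-sum identity to get $g(\{x\})=(1-P(x,x))/(1-\pi(x))$. One small correction to your closing remark: reversibility is not used only through Proposition \ref{prop: Tr(Pbar)} — Proposition \ref{prop: frob A_alpha} also needs it, since self-adjointness of $A_\alpha$ is what gives $\|A_\alpha-\Pi\|_{F,\pi}^2=\Tr(A_\alpha^2)-1$, while the cut-flux symmetry used in Proposition \ref{prop: Tr(Pbar)} in fact only needs $\pi$-stationarity.
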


\subsection{Submodular optimisation of Frobenius objective}

We now turn to submodular optimisation in our attempt to minimise the objective $\|A_\alpha(S) - \Pi\|_{F,\pi}^2.$ Throughout this subsection, let $\mathcal D:=\{S\subseteq\mathcal X:0<\pi(S)<1\}.$ When we say a set function $f$ is submodular in this subsection, we mean that $f: \mathcal{D} \to \mathbb{R}$ and that the submodularity inequality 
$$f(A)+f(B)\geq f(A\cup B)+f(A\cap B)$$
holds for all pairs $A,B \in \mathcal{D}$ such that $0 < \pi(A\cup B), \pi(A\cap B) < 1.$ Similarly, supermodularity implies the same with the inequality reversed. We refer the reader to \cite{Krause_Golovin_2014} for a more comprehensive overview of submodular functions and maximisation.

The results in Propositions \ref{prop: frob A_alpha} and \ref{prop: Tr(Pbar)} show that one can write 
$$\|A_\alpha(S) - \Pi\|_{F,\pi}^2 = \alpha^2\Tr(P^2) - 2\alpha(1-\alpha)g(S) + 1 - 2\alpha^2.$$
It can then be shown that $g(S)$ can be written as a difference of two submodular functions, and by extension, so can $\|A_\alpha(S) - \Pi\|_{F,\pi}^2$.

\begin{prop}
    For any subset $S \in \mathcal D$, and $P \in \mathcal{L}(\pi)$, 
    \begin{align*}
        g(S) = &\frac{1}{\pi(S) \pi(S')} \sum_{\substack{x \in S \\ y \in S'}} \pi(x) P(x,y)\\
    = &\underbrace{\dfrac{1}{\pi(S)\pi(S')} - 2}_{\text{supermodular}} - \underbrace{\dfrac{1}{\pi(S)} \sum_{x,y \in S'} \pi(x) P(x,y)}_{\text{supermodular}} \\
            &\quad - \underbrace{\dfrac{1}{\pi(S')} \sum_{x,y \in S} \pi(x) P(x,y).}_{\text{supermodular}}
    \end{align*}
\end{prop}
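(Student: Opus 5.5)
The plan has two parts. First I verify the algebraic identity using only reversibility and $\pi(S)+\pi(S')=1$. Then I establish supermodularity of each bracketed term from a few standard closure properties of supermodular set functions.

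\emph{The identity.} Write $p=\pi(S)$, $q=\pi(S')=1-p$, and $a=\sum_{x\in S,\,y\in S'}\pi(x)P(x,y)$. As in the proof of Proposition \ref{prop: Tr(Pbar)}, row sums and reversibility give
$$\sum_{x,y\in S}\pi(x)P(x,y)=p-a,\qquad \sum_{x,y\in S'}\pi(x)P(x,y)=q-a.$$
Substituting, the right-hand side becomes
$$\frac{1}{pq}-2-\frac{q-a}{p}-\frac{p-a}{q}=\frac{1-2pq-q^2-p^2}{pq}+a\Big(\frac1p+\frac1q\Big).$$
Since $p+q=1$, the first fraction is $(1-(p+q)^2)/(pq)=0$ and $1/p+1/q=1/(pq)$. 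This leaves $a/(pq)=g(S)$.

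\emph{Supermodularity tools.} I will use the following facts, all checked via marginal increments $\Delta_e f(S)=f(S\cup\{e\})-f(S)$, which for supermodular $f$ are nondecreasing in $S$.
\begin{enumerate}[(i)]
\item If $\varphi$ is convex on $(0,1)$, then $S\mapsto\varphi(\pi(S))$ is supermodular. Indeed $\Delta_e=\varphi(\pi(S)+\pi(e))-\varphi(\pi(S))$, and this is nondecreasing in $\pi(S)$ by convexity.
\item The quadratic form $b(S)=\sum_{x,y\in S}\pi(x)P(x,y)$ is supermodular, nonnegative and nondecreasing, because the matrix $(\pi(x)P(x,y))$ has nonnegative entries.
\item If $f$ is supermodular, then so is $S\mapsto f(S')$, and its monotonicity is reversed.
\item If $f$ and $g$ are nonnegative, supermodular and both nondecreasing (or both nonincreasing), then $fg$ is supermodular. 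For the nondecreasing case, $\Delta_e(fg)(S)=f(S\cup\{e\})\,\Delta_e g(S)+g(S)\,\Delta_e f(S)$. Each of the four factors is nonnegative and nondecreasing in $S$, so the sum is nondecreasing. The nonincreasing case follows by passing to complements via (iii).
\end{enumerate}

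\emph{Applying the tools to each term.}
\begin{itemize}
\item First term: $\varphi(t)=1/(t(1-t))$ is convex on $(0,1)$, so $1/(\pi(S)\pi(S'))$ is supermodular by (i). Subtracting the constant $2$ preserves this.
\item Third term: $S\mapsto 1/\pi(S')=1/(1-\pi(S))$ is nondecreasing, positive, and supermodular by (i). Combined with (ii) and (iv), the product $b(S)/\pi(S')$ is supermodular.
\item Second term: $S\mapsto \sum_{x,y\in S'}\pi(x)P(x,y)=b(S')$ is nonnegative, nonincreasing and supermodular by (ii) and (iii). Also $1/\pi(S)$ is positive, nonincreasing and supermodular by (i). Hence the product is supermodular by (iv).
\end{itemize}

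\emph{Main obstacle.} The delicate step is justifying (iv) on the restricted domain $\mathcal D$ with the paper's pairwise definition, rather than via increments on the full Boolean lattice. I would handle this by chaining single-element increments along a path from $A\cap B$ to $A$ and from $B$ to $A\cup B$. Every intermediate set lies between $A\cap B$ and $A\cup B$, so its $\pi$-mass stays in $(0,1)$ and it remains in $\mathcal D$. The increment argument therefore applies throughout, which yields the pairwise inequality.
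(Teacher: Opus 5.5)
Your proof is correct. The identity is the paper's computation run in reverse. The paper splits $g(S)=a/\pi(S)+a/\pi(S')$, rewrites each piece using $\sum_{x,y\in S}\pi(x)P(x,y)=\pi(S)-a$ and $\sum_{x,y\in S'}\pi(x)P(x,y)=\pi(S')-a$, and then uses $\pi(S')/\pi(S)+\pi(S)/\pi(S')=1/(\pi(S)\pi(S'))-2$. You substitute the same two expressions into the right-hand side and simplify.

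The genuine difference is in the supermodularity labels. The paper does not prove them; it imports all three from Lemma 6.9 of \cite{submodular}. You derive them from four elementary closure facts: convex functions of $\pi(S)$, the nonnegative quadratic form $b$, complementation, and products of nonnegative, co-monotone supermodular functions. You also handle the restricted domain $\mathcal D$ explicitly, by a chaining argument that keeps every intermediate set between $A\cap B$ and $A\cup B$, and hence inside $\mathcal D$.

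The paper's route is shorter. Yours is self-contained and shows why each term is supermodular: $t\mapsto 1/t$, $1/(1-t)$ and $1/(t(1-t))$ are convex on $(0,1)$, and $\pi(x)P(x,y)\ge 0$. It also shows in passing that reversibility is never used for the supermodularity claims. In fact, the identity itself needs only the cut-flow balance $\sum_{x\in S,\,y\in S'}\pi(x)P(x,y)=\sum_{x\in S',\,y\in S}\pi(x)P(x,y)$, which holds for every $\pi$-stationary $P$.
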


\begin{proof}
    Recall the definition of $g(S)$ given in \eqref{eq: g(S)}. Then 
    \begin{align*}
        g(S) &= \frac{1}{\pi(S) \pi(S')} \sum_{\substack{x \in S \\ y \in S'}} \pi(x) P(x,y)\\
        &= \frac{1}{\pi(S)}  \sum_{\substack{x \in S \\ y \in S'}} \pi(x) P(x,y) + \frac{1}{\pi(S')} \sum_{\substack{x \in S \\ y \in S'}} \pi(x) P(x,y).\\
    \end{align*}
    By $\pi$-reversibility of $P$, the first term yields
    \begin{align*}
        \frac{1}{\pi(S)}  \sum_{\substack{x \in S \\ y \in S'}} \pi(x) P(x,y) &= \frac{1}{\pi(S)}  \sum_{\substack{x \in S' \\ y \in S}} \pi(x) P(x,y)\\
        &= \frac{1}{\pi(S)} \sum_{x\in S'} \pi(x) \left[1 - \sum_{y \in S'}P(x,y)\right]\\
        &= \frac{\pi(S')}{\pi(S)} - \frac{1}{\pi(S)}\sum_{\substack{x, y \in S'}} \pi(x) P(x,y).
    \end{align*}
    The second term can be similarly written as 
    \begin{align*}
        \frac{1}{\pi(S')} \sum_{\substack{x \in S \\ y \in S'}} \pi(x) P(x,y) = \frac{\pi(S)}{\pi(S')} - \frac{1}{\pi(S')}\sum_{\substack{x, y \in S}} \pi(x) P(x,y).
    \end{align*}
    The equality holds by noting that 
    $$\frac{\pi(S')}{\pi(S)} + \frac{\pi(S)}{\pi(S')} = \frac{1-2\pi(S)\pi(S')}{\pi(S)\pi(S')} = \frac{1}{\pi(S)\pi(S')} - 2.$$

    Lemma 6.9 of \cite{submodular} shows that the functions 
    \begin{align*}
        \{S \subseteq \mathcal{X}; 0 < \pi(S) < 1\} \ni S &\mapsto \dfrac{1}{\pi(S)} \sum_{x \in S'} \sum_{y \in S'} \pi(x) P(x,y) \\
        \{S \subseteq \mathcal{X}; 0 < \pi(S) < 1\} \ni S &\mapsto \dfrac{1}{\pi(S')} \sum_{x \in S} \sum_{y \in S} \pi(x) P(x,y)
    \end{align*}
    are supermodular in $S$. The same result also shows that 
    \begin{align*}
        \{S \subseteq \mathcal{X}; 0 < \pi(S) < 1\} \ni S &\mapsto 3 - \dfrac{1}{\pi(S)\pi(S')}
    \end{align*}
    is submodular. Since addition of constant does not change submodularity, it follows that 
    $$\{S \subseteq \mathcal{X}; 0 < \pi(S) < 1\} \ni S \mapsto \dfrac{1}{\pi(S)\pi(S')} -2$$
    is supermodular. 
\end{proof}

As a corollary, we present the full difference-of-submodular decomposition of $\|A_\alpha(S) - \Pi\|_{F,\pi}^2.$

\begin{corollary} \label{cor: frob submodular}
    For any subset $S \in \mathcal{D}$ and $P \in \mathcal{L}(\pi)$, 
    \begin{align*}
        \|A_\alpha(S) - \Pi\|_{F,\pi}^2 &= \alpha^2\Tr(P^2) - 2\alpha(1-\alpha)g(S) + 1 - 2\alpha^2\\
         = &\underbrace{2\alpha(1-\alpha) \left(\dfrac{1}{\pi(S)} \sum_{x,y \in S'} \pi(x) P(x,y) + \dfrac{1}{\pi(S')} \sum_{x,y \in S} \pi(x) P(x,y)\right)}_{\text{supermodular}}\\
         &-\underbrace{\left(\dfrac{2\alpha(1-\alpha)}{\pi(S)\pi(S')} + 6\alpha^2 - 4\alpha - 1 - \alpha^2\Tr(P^2)\right).}_{\text{supermodular}}
    \end{align*}
\end{corollary}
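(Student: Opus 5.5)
The plan is to combine the two displayed identities already established and then track constants and supermodularity.

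First, I will confirm the first line of the corollary. Specialising Proposition \ref{prop: frob A_alpha} to $k=2$ gives
$$\|A_\alpha(S)-\Pi\|_{F,\pi}^2 = 2\alpha(1-\alpha)\Tr(\overline{P}) + \alpha^2\Tr(P^2) + 2(1-\alpha)^2 - 1.$$
Proposition \ref{prop: Tr(Pbar)} gives $\Tr(\overline{P}) = 2 - g(S)$. Substituting and collecting constants, $4\alpha(1-\alpha) + 2(1-\alpha)^2 - 1 = 1-2\alpha^2$, which yields $\alpha^2\Tr(P^2) - 2\alpha(1-\alpha)g(S) + 1 - 2\alpha^2$.

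Next, I will insert the decomposition of $g(S)$ from the preceding proposition:
$$g(S) = \frac{1}{\pi(S)\pi(S')} - 2 - B(S) - C(S),$$
where $B(S) = \pi(S)^{-1}\sum_{x,y\in S'}\pi(x)P(x,y)$ and $C(S) = \pi(S')^{-1}\sum_{x,y\in S}\pi(x)P(x,y)$. Multiplying by $-2\alpha(1-\alpha)$ produces three pieces: $+2\alpha(1-\alpha)(B(S)+C(S))$, the term $-2\alpha(1-\alpha)/(\pi(S)\pi(S'))$, and the constant $+4\alpha(1-\alpha)$.

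The remaining constants are $\alpha^2\Tr(P^2) + 1 - 2\alpha^2 + 4\alpha - 4\alpha^2 = \alpha^2\Tr(P^2) + 1 + 4\alpha - 6\alpha^2$. Pulling a minus sign out of everything except the $B+C$ term gives
$$-\Bigl(\frac{2\alpha(1-\alpha)}{\pi(S)\pi(S')} + 6\alpha^2 - 4\alpha - 1 - \alpha^2\Tr(P^2)\Bigr),$$
which matches the stated form. This sign and constant bookkeeping is the only place an error could creep in, so it is the main thing to check carefully.

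Finally, I will justify the supermodularity labels using the preceding proposition (via Lemma 6.9 of \cite{submodular}), which shows that $B$, $C$ and $1/(\pi(S)\pi(S'))$ are supermodular on $\mathcal D$. Since $\alpha\in[0,1]$ gives $2\alpha(1-\alpha)\ge 0$, and supermodularity is preserved under nonnegative scaling, under sums, and under adding constants, both bracketed terms are supermodular. Hence the objective is a difference of two supermodular functions. Equivalently, negating both terms writes it as a difference of two submodular functions, which completes the corollary.
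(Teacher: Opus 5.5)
Your proposal is correct and follows the route the paper implicitly takes (the paper gives no separate proof). It specialises Proposition \ref{prop: frob A_alpha} to $k=2$, substitutes $\Tr(\overline{P}) = 2 - g(S)$ from Proposition \ref{prop: Tr(Pbar)}, and inserts the preceding decomposition of $g(S)$. Supermodularity of both bracketed terms then follows from nonnegative scaling by $2\alpha(1-\alpha)$, sums, and constant shifts, and your constant bookkeeping $1-2\alpha^2+4\alpha(1-\alpha) = 1+4\alpha-6\alpha^2$ checks out.
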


A majorisation-minimisation (MM) algorithm can then be developed based on the results of Corollary \ref{cor: frob submodular}. One such possible majorisation uses the fact that since $(0,1) \ni t \mapsto 1/(t(1-t))$ is convex and differentiable, for $S,S^0 \subseteq \mathcal{X}$ with $S, S^0 \neq \emptyset,\mathcal{X}$, the supporting hyperplane theorem gives
\begin{align*}
    \dfrac{1}{\pi(S)\pi(S')} \geq \dfrac{1}{\pi(S^0)\pi(S^{0'})} + \dfrac{2\pi(S^0) - 1}{\pi(S^0)^2 (1 - \pi(S^0))^2}(\pi(S) - \pi(S^0)).
\end{align*}
Note that the term on the right-hand side is modular in $S$. One thus have 
\begin{align*}
        \|A_\alpha(S) - \Pi\|_{F,\pi}^2 &= \alpha^2\Tr(P^2) - 2\alpha(1-\alpha)g(S) + 1 - 2\alpha^2\\
         &\leq 2\alpha(1-\alpha) \left(\dfrac{1}{\pi(S)} \sum_{x,y \in S'} \pi(x) P(x,y) + \dfrac{1}{\pi(S')} \sum_{x,y \in S} \pi(x) P(x,y)\right)\\
         &-(2\alpha(1-\alpha)\left(\dfrac{1}{\pi(S^0)\pi(S^{0'})} + \dfrac{2\pi(S^0) - 1}{\pi(S^0)^2 (1 - \pi(S^0))^2}(\pi(S) - \pi(S^0))\right) \\
         &- 6\alpha^2 + 4\alpha + 1 + \alpha^2\Tr(P^2)\\
         &=:  \zeta(S;S^0) = \zeta(S,P^2,\pi;S^0),
\end{align*}
which is supermodular in $S$. 

Similar to the MM-type approach discussed in Section 6.4 of \cite{submodular},
one may use algorithms for supermodular minimisation, such as those proposed
in \cite{uriel_2011}, to generate candidate updates for the surrogate problem.

If the majorisation subproblem is solved exactly, namely if
$$S^1\in \argmin_{S\in\mathcal A}\zeta(S;S^0),$$
then since $S^0\in\mathcal A$, we have
$$\|A_\alpha(S^1)-\Pi\|_{F,\pi}^2 \leq \zeta(S^1;S^0) \leq \zeta(S^0;S^0) = \|A_\alpha(S^0)-\Pi\|_{F,\pi}^2.$$

Thus the ideal MM procedure produces a non-increasing sequence of Frobenius
objectives. Iterating this exact update, for any positive integer $l$,
$$S^l\in \argmin_{S\in\mathcal A}\zeta(S;S^{l-1}),$$
gives
$$\|A_\alpha(S^0)-\Pi\|_{F,\pi}^2 \geq \|A_\alpha(S^1)-\Pi\|_{F,\pi}^2 \geq \cdots \geq \|A_\alpha(S^l)-\Pi\|_{F,\pi}^2.$$

In practice, however, one may only obtain an approximate minimiser. Suppose
that a candidate $\widehat S^1\in\mathcal A$ satisfies
$$\zeta(\widehat S^1;S^0) \leq \min_{S\in\mathcal A}\zeta(S;S^0)+\varepsilon_0.$$

Then, since $S^0\in\mathcal A$, $$\|A_\alpha(\widehat S^1)-\Pi\|_{F,\pi}^2 \leq \zeta(\widehat S^1;S^0) \leq \|A_\alpha(S^0)-\Pi\|_{F,\pi}^2+\varepsilon_0.$$

To preserve exact monotonicity, one may use the acceptance rule
$$S^1 = \begin{cases}
    \widehat S^1, & \zeta(\widehat S^1;S^0)\leq \zeta(S^0;S^0),\\
    S^0, & \text{otherwise}.
    \end{cases}$$
    
Under this accepted update,
$$\|A_\alpha(S^1)-\Pi\|_{F,\pi}^2 \leq \|A_\alpha(S^0)-\Pi\|_{F,\pi}^2.$$

More generally, if $\widehat S^l$ is an approximate candidate generated from
$\zeta(\cdot;S^{l-1})$, and $S^l$ is defined by the corresponding acceptance
rule, then
$$\|A_\alpha(S^0)-\Pi\|_{F,\pi}^2 \geq \|A_\alpha(S^1)-\Pi\|_{F,\pi}^2 \geq \cdots \geq \|A_\alpha(S^l)-\Pi\|_{F,\pi}^2.$$

We remark that there are many other possible choices of majorisation/minorisation functions, and we refer readers to the related works by \cite{iyer2013algorithmsapproximateminimizationdifference}, where they discuss modular upper and lower bounds, with several other approximation procedures motivated by the MM algorithm. 

\section{KL divergence} \label{sec: KL}
We begin the section by recalling the definitions related to Kullback-Leibler (KL) divergence, followed by an analysis of the kernel $A_\alpha$ in terms of KL divergence to stationarity. 

For any two probability distributions on $\mathcal{X}$, the KL divergence of $\mu$ from $\nu$ is given by 
$$\KL{}(\mu \| \nu) := \sum_{x \in \mathcal{X}} \mu(x) \log \frac{\mu(x)}{\nu(x)}.$$

For Markov chains, suppose $P, Q \in \mathcal{S}(\pi)$, then the KL divergence of $P$ from $Q$ weighted by $\pi$ is defined to be 
$$\KL{\pi}(P\|Q) := \sum_{x,y \in \mathcal{X}} \pi(x) P(x,y) \log\frac{P(x,y)}{Q(x,y)}.$$

Note that we take $0 \log(0/a) := 0$ by convention for all $a \in [0,1]$ for the above definitions. 

We also define the Shannon entropy of any given distribution $\pi$ on $\mathcal{X}$ to be 
$$H(\pi) := -\sum_{x\in\mathcal X} \pi(x)\log\pi(x).$$

Our first result concerns the KL divergence of $Q_\alpha$ to its stationary chain $\widetilde{\Pi}_\alpha$, where $\widetilde{\Pi}_\alpha$ has all rows equal to $\widetilde{\pi}_\alpha$. 

\begin{prop} \label{prop: KL Q_alpha}
    For $P \in \mathcal{S}(\pi)$ and $G$ a Gibbs kernel defined on some partition $(\mathcal{O}_i)_{i=1}^k$, 
    \begin{equation}\label{eq: KL Q_alpha}
        \KL{\widetilde{\pi}_\alpha}(Q_\alpha\ \|\ \widetilde{\Pi}_\alpha) = (1-\alpha)\KL{\pi} (G\| \Pi) + \alpha \KL{\pi}(P \| \Pi). 
    \end{equation}
\end{prop}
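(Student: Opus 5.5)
The plan is to expand the definition of $\KL{\widetilde{\pi}_\alpha}(Q_\alpha\,\|\,\widetilde{\Pi}_\alpha)$ directly. By Proposition \ref{prop:Q stationary}, the stationary distribution is $\widetilde{\pi}_\alpha(y,j) = \pi(y)R(j)$. Hence every row of $\widetilde{\Pi}_\alpha$ has entry $\pi(y)(1-\alpha)$ at $(y,+1)$ and $\pi(y)\alpha$ at $(y,-1)$. We sum over $(x,i)$ and $(y,j)$, split the inner sum according to $j=+1$ and $j=-1$, and substitute the explicit form of $Q_\alpha$. Throughout we use the convention $0\log(0/a)=0$, and we may assume $\alpha\in(0,1)$: for $\alpha\in\{0,1\}$ one of the two pieces has zero mass and vanishes trivially.

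For $j=+1$ the log-ratio is
$$\log\frac{(1-\alpha)G(x,y)}{(1-\alpha)\pi(y)} = \log\frac{G(x,y)}{\pi(y)}.$$
The factor $1-\alpha$ cancels, which is the key simplification. For $j=-1$ we similarly get $\log\frac{P(x,y)}{\pi(y)}$. Neither term depends on $i$, and neither does $Q_\alpha((x,i),(y,j))$. Summing over $i$ therefore gives $\sum_i R(i)=1$, exactly as in the proof of Proposition \ref{prop:Q stationary}. This yields
$$(1-\alpha)\sum_{x,y}\pi(x)G(x,y)\log\frac{G(x,y)}{\pi(y)} + \alpha\sum_{x,y}\pi(x)P(x,y)\log\frac{P(x,y)}{\pi(y)}.$$
Since $\Pi(x,y)=\pi(y)$, this is precisely $(1-\alpha)\KL{\pi}(G\|\Pi)+\alpha\KL{\pi}(P\|\Pi)$, which establishes \eqref{eq: KL Q_alpha}.

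There is no real obstacle here. The only point needing care is the zero-entry convention: entries where $G(x,y)=0$ or $P(x,y)=0$ contribute zero on both sides, and $\pi(y)>0$ guarantees that the log-ratios are well defined. Note also that no reversibility is needed; stationarity of $P$ and $G$ is used only through the identification of $\widetilde{\pi}_\alpha$.
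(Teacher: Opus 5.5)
Your proposal is correct and follows essentially the same route as the paper. Both expand the definition, observe that the factor $R(j)$ cancels inside the logarithm for each $j\in\{-1,+1\}$, and use $\sum_i R(i)=1$ because nothing depends on $i$; the paper merely organises this row by row through the per-state divergences of $Q_\alpha((x,i),\cdot)$ from $\widetilde{\pi}_\alpha$, which is a cosmetic difference.
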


\begin{proof}
    The result is immediate for the cases $\alpha = 0$ or $1$. For $\alpha \in (0,1)$, we have by definition that
    $$\KL{\widetilde{\pi}_\alpha}(Q_\alpha \| \widetilde{\Pi}_\alpha) = \sum_{(x,i)\in \widetilde{\mathcal X}} \widetilde{\pi}_\alpha(x,i) \KL{}(Q_\alpha((x,i),\cdot)\|\widetilde{\Pi}_\alpha((x,i),\cdot)).$$
    
    Since
    $$Q_\alpha((x,i),(y,j))=R(j)K_j(x,y) \quad \mathrm{and} \quad \widetilde{\Pi}_\alpha((x,i),(y,j))=R(j)\Pi(y),$$
    
    for each $(x,i)$, we have that
    \begin{align*}
    \KL{}(Q_\alpha((x,i),\cdot)\|\widetilde{\Pi}_\alpha((x,i),\cdot)) &= \sum_{j\in\{-1,+1\}}\sum_{y\in\mathcal X} R(j)K_j(x,y)\log\frac{R(j)K_j(x,y)}{R(j)\Pi(y)} \\
    &= \sum_{j\in\{-1,+1\}} R(j)\KL{}(K_j(x,\cdot)\|\pi).
    \end{align*}
    
    Hence,
    \begin{align*}
    \KL{\widetilde{\pi}_\alpha}(Q_\alpha \| \widetilde{\Pi}_\alpha)
    &= \sum_{x\in\mathcal X}\sum_{i\in\{-1,+1\}} \pi(x)R(i) \sum_{j\in\{-1,+1\}} R(j)\KL{}(K_j(x,\cdot)\|\pi) \\
    &= \sum_{x\in\mathcal X}\pi(x)\sum_{j\in\{-1,+1\}} R(j)\KL{}(K_j(x,\cdot)\|\pi)\\
    &= \sum_{j\in\{-1,+1\}} R(j) \sum_{x\in\mathcal X}\pi(x)\KL{}(K_j(x,\cdot)\|\pi).
    \end{align*}
    
    Recalling $K_{+1}=G$, $K_{-1}=P$, we conclude
    $$\KL{\widetilde{\pi}_\alpha}(Q_\alpha \| \widetilde{\Pi}_\alpha) = (1-\alpha)\KL{\pi}(G\|\Pi)+\alpha \KL{\pi}(P\|\Pi).$$
\end{proof}

Equation \eqref{eq: KL Q_alpha} of Proposition \ref{prop: KL Q_alpha} further implies that for a fixed sampler $P$, minimising the KL divergence of $Q_\alpha$ to stationarity is equivalent to minimising that of $G$ from its respective stationary distribution.  

\begin{lemma} \label{lem: KL G = H}
    Let $G$ be the Gibbs kernel defined by the partition $(\mathcal{O}_i)_{i=1}^k$. Then 
    $$\KL{\pi}(G\| \Pi) = H(\overline{\pi}),$$
    where we recall $\overline{\pi} = (\pi(\mathcal{O}_1),\ldots,\pi(\mathcal{O}_k)).$
\end{lemma}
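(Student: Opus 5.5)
The plan is a direct computation from the definition of $\KL{\pi}(G\|\Pi)$, using only the explicit form of the Gibbs kernel $G$. By definition,
$$\KL{\pi}(G\|\Pi) = \sum_{x,y\in\mathcal X}\pi(x)G(x,y)\log\frac{G(x,y)}{\pi(y)}.$$
Entries with $G(x,y)=0$, that is $y\notin\mathcal O(x)$, contribute nothing by the convention $0\log(0/a)=0$. So the sum restricts to pairs with $y\in\mathcal O(x)$. For those pairs we have $G(x,y)=\pi(y)/\pi(\mathcal O(x))$, and the log-ratio simplifies to $\log\bigl(1/\pi(\mathcal O(x))\bigr) = -\log\pi(\mathcal O(x))$. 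The key observation is that this quantity does not depend on $y$.

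The next step is to sum over $y$ first. For fixed $x$, we have $\sum_{y\in\mathcal O(x)}G(x,y)=1$, so the inner sum equals $-\log\pi(\mathcal O(x))$. Equivalently, each row KL satisfies $\KL{}(G(x,\cdot)\|\pi)=-\log\pi(\mathcal O(x))$. This gives
$$\KL{\pi}(G\|\Pi) = -\sum_{x\in\mathcal X}\pi(x)\log\pi(\mathcal O(x)).$$

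Finally, we group the outer sum by blocks of the partition. Writing $\mathcal X=\bigsqcup_{i=1}^k\mathcal O_i$ and using $\sum_{x\in\mathcal O_i}\pi(x)=\pi(\mathcal O_i)$, we get
$$-\sum_{i=1}^k\pi(\mathcal O_i)\log\pi(\mathcal O_i)=H(\overline\pi).$$

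There is no genuine obstacle here. The only points needing care are correct handling of the zero entries of $G$ via the stated convention, and the fact that $\pi\in\mathcal P(\mathcal X)$ is strictly positive. Positivity guarantees that every $\pi(\mathcal O_i)>0$, so all logarithms are finite.
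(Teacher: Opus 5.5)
Your proof is correct and follows essentially the same direct computation as the paper: you restrict to $y \in \mathcal{O}(x)$, observe that the log-ratio equals $-\log \pi(\mathcal{O}(x))$ independently of $y$, and regroup the sum by blocks to obtain $H(\overline{\pi})$. The only cosmetic difference is the order of summation: you sum over $y$ first using the row sums of $G$, whereas the paper sums $\pi(x)\pi(y)/\pi(\mathcal{O}_i)$ over $x,y\in\mathcal{O}_i$ in one step.
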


\begin{proof}
    \begin{align*}
        \KL{\pi}(G\| \Pi) &= \sum_{x,y \in \mathcal{X}} \pi(x) G(x,y) \log \frac{G(x,y)}{\pi(y)}\\
        &= \sum_{i=1}^k \sum_{x,y \in \mathcal{O}_i} \frac{\pi(x)\pi(y)}{\pi(\mathcal{O}_i)} \log \frac{1}{\pi(\mathcal{O}_i)}\\
        &= -\sum_{i=1}^k \pi(\mathcal{O}_i) \log \pi(\mathcal{O}_i)\\
        &= H(\overline{\pi}). 
    \end{align*}
\end{proof}

\begin{prop} \label{prop: best O}
    Let $P \in \mathcal{S}(\pi)$ be some fixed sampler and $G$ the Gibbs sampler induced by some partition $(\mathcal{O}_i)_{i=1}^k$, where $n \geq k \geq 2.$. Suppose further that $\pi$ is ordered non-decreasingly, i.e. $\pi(1) \leq \pi(2) \leq \dots \leq \pi(n).$ Then for $\alpha \in [0,1)$, the partition
    \begin{equation}\label{eq: best O}
        (\mathcal{O}_i)_{i=1}^k = \begin{cases}
        \{ i \}, & \mathrm{if}\ 1 \leq i \leq k-1,\\
        \{ k, k+1, \dots, n\}, & \mathrm{if}\ i = k.
        \end{cases}
    \end{equation}
    belongs to the set of minimisers

    \begin{align*}
        \argmin_{\mathcal{O}_1 \neq \emptyset, \mathcal{X},\ldots,\mathcal{O}_k \neq \emptyset, \mathcal{X}} \KL{\widetilde{\pi}_\alpha}(Q_\alpha\ \|\ \widetilde{\Pi}_\alpha) 
        &= \argmin_{\mathcal{O}_1 \neq \emptyset, \mathcal{X},\ldots,\mathcal{O}_k \neq \emptyset, \mathcal{X}} \KL{\pi} (G\| \Pi) \\
        &= \argmin_{\mathcal{O}_1 \neq \emptyset, \mathcal{X},\ldots,\mathcal{O}_k \neq \emptyset, \mathcal{X}} H(\overline{\pi}).
    \end{align*}

    Moreover, the minimiser is unique up to permutation of block labels and ties in the probabilities $\pi(x)$. In particular, any minimising partition may be obtained by choosing $k-1$ states of smallest $\pi$-mass as singleton blocks, with the remaining states grouped into one block.
\end{prop}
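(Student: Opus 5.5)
The argument has two parts: reduce the problem to minimising $H(\overline{\pi})$, then solve that by a majorisation argument on the block-mass vector. For the reduction, Proposition \ref{prop: KL Q_alpha} gives
$$\KL{\widetilde{\pi}_\alpha}(Q_\alpha\,\|\,\widetilde{\Pi}_\alpha) = (1-\alpha)\KL{\pi}(G\|\Pi) + \alpha \KL{\pi}(P\|\Pi).$$
For fixed $P$ and $\alpha\in[0,1)$ the coefficient $1-\alpha$ is strictly positive and the second term does not depend on the partition. So the first two argmin sets coincide. By Lemma \ref{lem: KL G = H} the second and third coincide as well. It therefore suffices to minimise $H(\overline{\pi})$ over partitions of $\llbracket n\rrbracket$ into $k$ nonempty blocks.

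The core step is a merging/transfer argument. I would show that among all vectors $\overline{\pi}$ realisable by $k$-block partitions, the vector $p^\star=(\pi(1),\ldots,\pi(k-1),\sum_{j\ge k}\pi(j))$ majorises every other. Since $H$ is Schur-concave, it then attains its minimum at $p^\star$.

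To prove the majorisation, sort any block-mass vector $\overline{\pi}$ as $q_{(1)}\ge\cdots\ge q_{(k)}$. The $k-m$ smallest blocks are $k-m$ disjoint nonempty sets, so together they contain at least $k-m$ distinct states. Hence their total mass is at least $\pi(1)+\cdots+\pi(k-m)$. Equivalently, the $m$ largest partial sums satisfy
$$\sum_{i\le m} q_{(i)} \le 1-\sum_{j=1}^{k-m}\pi(j),$$
and this bound is exactly the $m$-th largest partial sum of $p^\star$. Note that $\sum_{j\ge k}\pi(j)\ge \pi(k)\ge \pi(k-1)\ge\cdots$, so the merged block is indeed the largest entry of $p^\star$. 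This gives $p^\star\succ\overline{\pi}$ for every admissible $\overline{\pi}$.

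For a self-contained alternative that avoids citing Schur-concavity, I would use an exchange step instead. Take a minimiser and suppose two blocks $B_1,B_2$ each have at least two states, or some singleton has larger mass than a state lying in a multi-state block. Moving a state from the smaller block to the larger block (or swapping the two states) pushes mass toward the larger block without decreasing the larger block's mass. Because $-t\log t-s\log s$ with $t+s$ fixed is decreasing as the split becomes more unequal, this does not increase $H$. Repeating the step reaches the partition \eqref{eq: best O}.

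The main obstacle is the uniqueness claim. Strict Schur-concavity of $H$ gives $H(\overline{\pi})>H(p^\star)$ unless $\overline{\pi}$ is a permutation of $p^\star$. Equality in the partial-sum bounds forces, for each $m$, the $k-m$ smallest blocks to be singletons of states of smallest mass. Taking $m=1$ gives $k-1$ singleton blocks whose masses are the $k-1$ smallest values, which up to ties are the states $1,\ldots,k-1$. The remaining states then form the last block. One degenerate situation needs care: $p^\star$ might coincide as a vector with a different structure because of ties among $\pi$ values, and this is exactly the stated caveat.
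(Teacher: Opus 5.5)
Your proof is correct, but your main route differs from the paper's. After the same reduction (Proposition~\ref{prop: KL Q_alpha} and Lemma~\ref{lem: KL G = H}), the paper argues only by pairwise exchange. For two blocks of combined mass $M$, strict convexity of $t\mapsto t\log t+(M-t)\log(M-t)$ shows that splitting off the lightest state $x_m$ of their union as a singleton, with the rest merged, strictly lowers $H(\overline{\pi})$. The paper applies this first to two multi-state blocks and then to a singleton heavier than $x_m$, and iterating yields \eqref{eq: best O}. That is essentially your ``self-contained alternative''.

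Your primary argument instead shows in one step that $p^\star$ majorises every feasible block-mass vector. It rests on the observation that the $k-m$ lightest blocks contain at least $k-m$ distinct states, and then applies strict Schur-concavity of $H$. This gains three things:
\begin{itemize}
\item It needs no iteration or termination argument.
\item Uniqueness comes from the equality case. There, strict positivity of $\pi$ (built into $\pi\in\mathcal{P}(\mathcal{X})$) is what forces the $k-1$ lightest blocks to be singletons.
\item Since majorisation concerns only the mass vectors, the same partition minimises every Schur-concave functional of $\overline{\pi}$ (e.g.\ all R\'enyi entropies), not just Shannon entropy.
\end{itemize}
The paper's route is more elementary, needing nothing beyond convexity in one variable. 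You also correctly make explicit that $1-\alpha>0$ is needed for the first equality of argmin sets, which the paper uses only implicitly.
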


\begin{proof}
    From Proposition \ref{prop: KL Q_alpha}, the first equality follows since the optimisation over the choice of orbits is independent of the KL divergence of $P$ from $\Pi$. The second optimisation uses the results of Lemma \ref{lem: KL G = H}. 
    
    Finally, the fact that $(\mathcal{O}_i)_{i=1}^k$ is a minimiser of $\KL{\pi}(G\|\Pi)$ follows from Proposition 7.1 of \cite{GPG}. We briefly recall the proof for completeness.

    If $k = n$, then $(\mathcal{O}_i)_{i=1}^k$ is the only partition of $\mathcal{X}$.
    
    Suppose $n > k$ and define $(\mathcal{C}_i)_{i=1}^k$ as an arbitrary partition of $\mathcal{X}$. Let $g(t) = t\log t$, and for any two blocks $\mathcal{C}_i, \mathcal{C}_j$ with total mass $M = \pi(\mathcal{C}_i)+\pi(\mathcal{C}_j)$, define $h(t)=g(t)+g(M-t)$. Note that $h$ is strictly convex on $(0,M)$ and achieves its maximum at the endpoints.

    Suppose there are two distinct blocks $\mathcal{C}_i$ and $\mathcal{C}_j$, both with more than one element. Let $x_m$ be the element within $\mathcal{C}_i \cup \mathcal{C}_j$ with the smallest probability.
    
    By strict convexity of $h$ on $(0,M)$,
    $$g(\pi(\mathcal{C}_i)) + g(\pi(\mathcal{C}_j)) = h(\pi(\mathcal{C}_i)) < h(\pi(x_m)) = g(\pi(x_m))+g(\pi((\mathcal{C}_i \cup \mathcal{C}_j) \setminus \{x_m\})).$$

    Thus replacing the pair $(\mathcal{C}_i,\mathcal{C}_j)$ by a new pair consisting of the singleton $\{x_m\}$ and the merged remainder $(\mathcal{C}_i \cup \mathcal{C}_j) \setminus \{x_m\}$ strictly decreases $H(\overline\pi)$. Iterating the argument implies that the minimum achieving partition must have only one non-singleton block.

    Suppose the partition now has only one non-singleton set $\mathcal{C}_i$, but among the singletons, there exist $\mathcal{C}_j = \{y\}$ such that $\pi(y) > \pi(x_m) = \min_{x \in \mathcal{C}_i} \pi(x).$
    
    The same convexity argument will then show that replacing $\mathcal{C}_i$ and $\{y\}$ by the pair $\{x_m\}$ and $(\mathcal{C}_i \cup \mathcal{C}_j) \setminus \{x_m\}$ strictly decreases the entropy. Iteratively, this shows that $(\mathcal{O}_i)_{i=1}^k$ achieves the minimum as claimed.
\end{proof}

Note that if the choice of Gibbs kernel $G$ is fixed, then $\KL{\widetilde{\pi}_\alpha}(Q_\alpha\ \|\ \widetilde{\Pi}_\alpha)$ depends on the choice of $P$ only through the term $\KL{\pi} (P\| \Pi)$, and neither $ \KL{\pi} (G\| \Pi)$ nor $H(\overline{\pi})$ depend on $P$.

The previous proposition gives a simple interpretation of the KL objective.
Since $\KL{\pi}(G\|\Pi)=H(\overline{\pi})$, the KL criterion favours partitions
where $\overline{\pi}$ has small entropy. This leads to the
minimising partition in Proposition \ref{prop: best O}, where the smallest-mass
states are placed into singleton blocks and the remaining larger-mass states are
grouped into one block. Intuitively, the KL objective favours concentrating mass
into fewer large blocks. From a computational viewpoint, however, such a large
block may be expensive to sample from exactly. Thus, while Proposition
\ref{prop: best O} identifies the partition favoured by the KL objective, this
partition may not be computationally feasible when the largest block is
comparable in size to the full state space.

We next show KL divergence of $A_\alpha$ from stationarity is in fact, bounded above by the convex combination of the KL divergence of $G$ and $P$ from $\Pi$. Naturally, Lemma \ref{lem: KL G = H} implies that the upper bound is also related to the Shannon entropy of $\overline{\pi}.$

\begin{prop} \label{prop: bound KL A_alpha}
    Let $P \in \mathcal{S}(\pi)$ be some fixed sampler and $G$ the Gibbs sampler induced by some partition $(\mathcal{O}_i)_{i=1}^k$. Then 
    $$\KL{\pi}(A_\alpha \| \Pi) \leq (1-\alpha)\KL{\pi} (G\| \Pi) + \alpha \KL{\pi}(P \| \Pi) = (1-\alpha) H(\overline{\pi}) + \alpha \KL{\pi}(P \| \Pi).$$
\end{prop}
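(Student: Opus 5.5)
The plan is to use joint convexity of the KL divergence, applied row by row. First I write
$$\KL{\pi}(A_\alpha\|\Pi)=\sum_{x\in\mathcal X}\pi(x)\,\KL{}\bigl(A_\alpha(x,\cdot)\,\|\,\pi\bigr).$$
For each fixed $x$, the row $A_\alpha(x,\cdot)=\alpha P(x,\cdot)+(1-\alpha)G(x,\cdot)$ is a convex combination of two probability vectors. The reference measure $\pi$ is the same in both cases, so I can write it as $\pi=\alpha\pi+(1-\alpha)\pi$. Convexity of $\mu\mapsto \KL{}(\mu\|\pi)$ then gives
$$\KL{}\bigl(A_\alpha(x,\cdot)\|\pi\bigr)\le \alpha\,\KL{}\bigl(P(x,\cdot)\|\pi\bigr)+(1-\alpha)\,\KL{}\bigl(G(x,\cdot)\|\pi\bigr).$$
Here convexity in the first argument is enough; it follows from the log-sum inequality, or equivalently from convexity of $t\mapsto t\log t$ applied coordinatewise.

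To justify the convexity concretely, fix $y$ and set $a=P(x,y)$ and $b=G(x,y)$. Convexity of $\varphi(t)=t\log(t/\pi(y))$ on $[0,\infty)$ gives $\varphi(\alpha a+(1-\alpha)b)\le\alpha\varphi(a)+(1-\alpha)\varphi(b)$. This also covers the convention $0\log 0=0$, since $\varphi$ extends continuously to $0$ and $\pi(y)>0$. Summing over $y$ gives the row inequality.

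Next I multiply by $\pi(x)$ and sum over $x$. This produces $(1-\alpha)\KL{\pi}(G\|\Pi)+\alpha\KL{\pi}(P\|\Pi)$. The final equality $\KL{\pi}(G\|\Pi)=H(\overline\pi)$ is exactly Lemma \ref{lem: KL G = H}.

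As a cross-check of the same statement, one can use the lift. $A_\alpha$ is the $\mathcal X$-marginal of $Q_\alpha$, and $\Pi$ is the $\mathcal X$-marginal of $\widetilde\Pi_\alpha$. Marginalising the $j$-coordinate in each row (the data processing inequality) then gives $\KL{\pi}(A_\alpha\|\Pi)\le\KL{\widetilde\pi_\alpha}(Q_\alpha\|\widetilde\Pi_\alpha)$. Proposition \ref{prop: KL Q_alpha} identifies the right-hand side as the stated convex combination.

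There is no real obstacle. The only care needed is handling zero entries of $G(x,\cdot)$ and $P(x,\cdot)$, and this is settled by the continuity and convexity of $\varphi$ at $0$.
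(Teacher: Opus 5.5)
Your proposal is correct and is essentially the paper's proof. The paper applies convexity of $t\mapsto t\log t$ entrywise to $A_\alpha(x,y)=\alpha P(x,y)+(1-\alpha)G(x,y)$, uses that the $-t\log\pi(y)$ part is linear, sums against $\pi(x)$, and invokes Lemma~\ref{lem: KL G = H}; this is your coordinatewise convexity of $t\mapsto t\log(t/\pi(y))$. Your extra check via the data processing inequality on the $\mathcal{X}$-marginal of $Q_\alpha$, combined with Proposition~\ref{prop: KL Q_alpha}, is also valid and shows the bound equals the KL divergence of the lifted chain to stationarity.
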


\begin{proof}
    Note that the function $x \mapsto x\log x$ is convex. Hence, for any $x,y \in \mathcal{X}$,
    $$A_\alpha(x,y)\log A_\alpha(x,y) \leq \alpha P(x,y)\log P(x,y) + (1-\alpha)G(x,y)\log G(x,y).$$
    \begin{align*}
        \KL{\pi}(A_\alpha \| \Pi)
        &= \sum_{x,y\in\mathcal{X}}\pi(x) A_\alpha(x,y)\log\frac{A_\alpha(x,y)}{\pi(y)} \\
        &\leq \sum_{x,y\in\mathcal{X}}\pi(x) \left((1-\alpha)G(x,y)\log \frac{G(x,y)}{\pi(y)} + \alpha P(x,y)\log \frac{P(x,y)}{\pi(y)}\right)\\
        &= (1-\alpha)\KL{\pi} (G\| \Pi) + \alpha \KL{\pi}(P \| \Pi)
    \end{align*}
    The last equality follows directly from Lemma \ref{lem: KL G = H}.
\end{proof}

As with the Frobenius objective, our analysis of the KL objective has shown that $A_\alpha$ need not always outperform $P$. We re-emphasises that the choice of partition is central to the performance of $A_\alpha$, in this case through the entropy term $H(\overline{\pi})$.

Finally, we present a corollary that directly follows Proposition \ref{prop: bound KL A_alpha}.
\begin{corollary}
    Suppose $P \in \mathcal{S}(\pi)$. Then 
    \small
    $$\min_{\mathcal{O}_1,\mathcal{O}_k \neq \emptyset, \mathcal{X}}\KL{\pi}(A_\alpha \| \Pi) \leq \alpha \KL{\pi}(P \| \Pi) 
    - (1-\alpha)\left(\sum_{i=1}^{k-1} \pi(i)\log \pi(i)  + \left(\sum_{j=k}^n \pi(j)\right) \log \sum_{j=k}^n \pi(j) \right).$$
    \normalsize
\end{corollary}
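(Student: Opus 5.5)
The plan is to pass from the bound for a fixed partition to the bound for the best one. Proposition \ref{prop: bound KL A_alpha} holds for every partition, so we minimise both sides over partitions. Then we evaluate the right-hand side at the explicit entropy-minimising partition of Proposition \ref{prop: best O}. As there, we assume $\pi$ is ordered non-decreasingly, $\pi(1)\le\dots\le\pi(n)$, and $k\ge 2$.

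\textbf{Step 1.} Fix any admissible partition $(\mathcal{O}_i)_{i=1}^k$. Proposition \ref{prop: bound KL A_alpha} gives
$$\KL{\pi}(A_\alpha(\mathcal{O}_1,\ldots,\mathcal{O}_k)\|\Pi)\le (1-\alpha)H(\overline{\pi}) + \alpha\KL{\pi}(P\|\Pi).$$
The term $\alpha\KL{\pi}(P\|\Pi)$ does not depend on the partition. Taking the minimum over partitions on the left, we then bound it by the value at any particular partition. The natural choice is a partition minimising $H(\overline{\pi})$, which gives
$$\min \KL{\pi}(A_\alpha\|\Pi)\le \alpha\KL{\pi}(P\|\Pi)+(1-\alpha)\min H(\overline{\pi}).$$
Note that we must evaluate $\KL{\pi}(A_\alpha\|\Pi)$ at the entropy-optimal partition; we cannot interchange minima. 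This is fine because the left side is a minimum and is therefore at most its value at that specific partition.

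\textbf{Step 2.} By Proposition \ref{prop: best O}, for $\alpha\in[0,1)$ the minimiser of $H(\overline{\pi})$ is the partition \eqref{eq: best O}: singletons $\{1\},\ldots,\{k-1\}$ together with the block $\{k,\ldots,n\}$. The entropy minimisation does not actually depend on $\alpha$. For $\alpha=1$ the $(1-\alpha)$ term vanishes, so the bound is trivially valid for any partition. For this partition, $\overline{\pi}=(\pi(1),\ldots,\pi(k-1),\sum_{j=k}^n\pi(j))$, so
$$H(\overline{\pi})=-\sum_{i=1}^{k-1}\pi(i)\log\pi(i)-\Big(\sum_{j=k}^n\pi(j)\Big)\log\sum_{j=k}^n\pi(j).$$
Substituting this into Step 1 yields the stated inequality.

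\textbf{Main obstacle.} The only delicate point is admissibility. The minimum in the corollary ranges over nontrivial partitions (blocks $\neq\emptyset,\mathcal{X}$), and the partition \eqref{eq: best O} must belong to this class. Since $k\ge 2$, every block is nonempty, and the last block omits state $1$, so no block equals $\mathcal{X}$. Beyond this check, the argument is a direct combination of the two cited propositions.
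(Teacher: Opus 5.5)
Your proposal is correct and follows the paper's own argument. You bound the minimum over partitions by its value at the explicit partition \eqref{eq: best O}, apply Proposition~\ref{prop: bound KL A_alpha} there, and compute $H(\overline{\pi})$ for that partition. Strictly speaking, you need neither the optimality part of Proposition~\ref{prop: best O} nor the ordering of $\pi$: the inequality only requires evaluating the bound at that single admissible partition.
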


\begin{proof}
    It is straightforward to see that $\min_{\mathcal{O}_1,\ldots,\mathcal{O}_k}\KL{\pi}(A_\alpha \| \Pi)$ must be smaller than all other choices of $k$-partition. 

    The right-hand side of the inequality can then be obtained by solving for $H(\overline{\pi})$, with $\overline{\pi}$ induced by the orbit in \eqref{eq: best O}.
\end{proof}

\section{Numerical experiments} \label{sec: exp}

The numerical experiments in this section are intended to illustrate three aspects of the additive sampler $A_\alpha$. First, since $A_\alpha$ combines the baseline kernel $P$ with the Gibbs kernel $G$, it is natural to compare its performance with other samplers involving the same Gibbs component, such as $GP$ and $GPG$. This places the additive construction in the context of existing group-averaging samplers. Second, we compare fixed and optimised choices of the partition in order to illustrate the partition-selection criteria studied earlier. Third, the parameter $\alpha$ is a user-specified tuning parameter which directly controls the balance between local exploration through $P$ and within-block averaging through $G$. We therefore study how the convergence behaviour changes as $\alpha$ varies.

We will be using the Curie-Weiss model as the benchmark throughout our numerical experiments in this paper. All results can be reproduced with the codes given in the repository \url{https://github.com/ryan-2357/-additive}.

Before presenting the results, we shall first recall the model and its setup. Let $\mathcal{X} = \{-1, +1\}^d$ be a $d$-dimensional state space and define the Hamiltonian function for $x = (x^1, \dots, x^d) \in \mathcal{X}$ to be  
$$\mathcal{H}(x) = - \sum_{i,j=1}^d \frac{1}{2^{|j-i|}} x^i x^j - h \sum_{i=1}^d x^i.$$
In this context, the interaction coefficient is given by $\frac{1}{2^{|j-i|}}$ and the external magnetic field is $h \in \mathbb{R}$. An in-depth discussion of the model can be found in \cite{Bovier_denHollander_2015}.

The stationary distribution we shall consider will then be the Gibbs distribution at temperature $T > 0.$ That is, 
$$\pi(x) = \frac{e^{-\frac{1}{T} \mathcal{H}(x)}}{\sum_{z\in \mathcal{X}}e^{-\frac{1}{T} \mathcal{H}(z)}}.$$

Define the Glauber dynamics with a simple random walk as 
$$P(x,y) = \begin{cases}
    \frac{1}{d} e^{-\frac{1}{T}(\mathcal{H}(y)-\mathcal{H}(x))_+}, & \mathrm{for}\ y = (x^1, \dots, -x^i, \dots, x^d), i \in \llbracket d \rrbracket,\\
    1-\sum_{y \neq x} P(x,y), & \mathrm{if}\ x = y,\\
    0, & \mathrm{otherwise,}
    \end{cases}$$
where for $m \in \mathbb{R},$ $m_+ = \max\{m,0\}$. This sampler is equivalent to uniformly picking one of the $d$ coordinates, flipping it to the opposite sign and performing an acceptance-rejection stage.
This sampler will be the baseline sampler $P$ used in all subsequent experiments. 

We also give a brief introduction to total variation distance, which will be the core metric used to discuss the performance of our samplers. 

For any two probability distributions $\mu, \nu$ on $\mathcal{X}$, we define the total variation (TV) distance between them to be 
$$\| \mu - \nu \|_{TV} := \frac{1}{2} \sum_{x\in \mathcal{X}} |\mu(x)-\nu(x)|,$$
and we denote
$$\mathbb{N} \ni l \mapsto \max_{x\in \mathcal{X}} \|P^l(x,\cdot) - \pi \|_{TV}$$
to be the worst-case TV distance of the sampler $P$.

\subsection{Comparison of $A_\alpha$ with other group-averaging samplers under fixed partition} \label{subsec: fixedS}
We compare the mixing behaviour of four kernels: the baseline Glauber dynamics $P$, the multiplicative group-averaged kernels $G_S P$ and $G_S P G_S$, and the additive kernel $A(S) = 1/2(P + G_S)$, using a fixed partition induced by the sign of the magnetisation. Specifically, we define
$$m(x) = \sum_{i=1}^d x^i$$
and set 
$$S = \{x \in \mathcal{X}: m(x) \geq 0\}.$$

Here, $S$ is chosen to separate the negative and non-negative magnetisation phases. We note that this choice is not based on a group action, but rather is intended as a simple, physically interpretable, non-optimised partition. The goal here is twofold: first, to illustrate that the proposed kernels can improve upon the baseline sampler $P$ even under a basic choice of $S$ that is not necessarily symmetry-based; and second, to provide a fixed benchmark against which the optimised choice of $S$ can later be compared.

The Gibbs kernel $G_S$ is then defined with respect to the partition $\mathcal{X} = S \sqcup S^c$.

We consider four model settings, corresponding to combinations of high and low temperature regimes ($T = 15$ and $T = 2$) and external field strengths ($h = 0$ and $h = 2$).

Figure \ref{fig:4sampler} presents the worst-case total variation (TV) distance as a function of time for all four kernels across these settings.

\begin{figure}[h]
\centering
\includegraphics[width=1\linewidth]{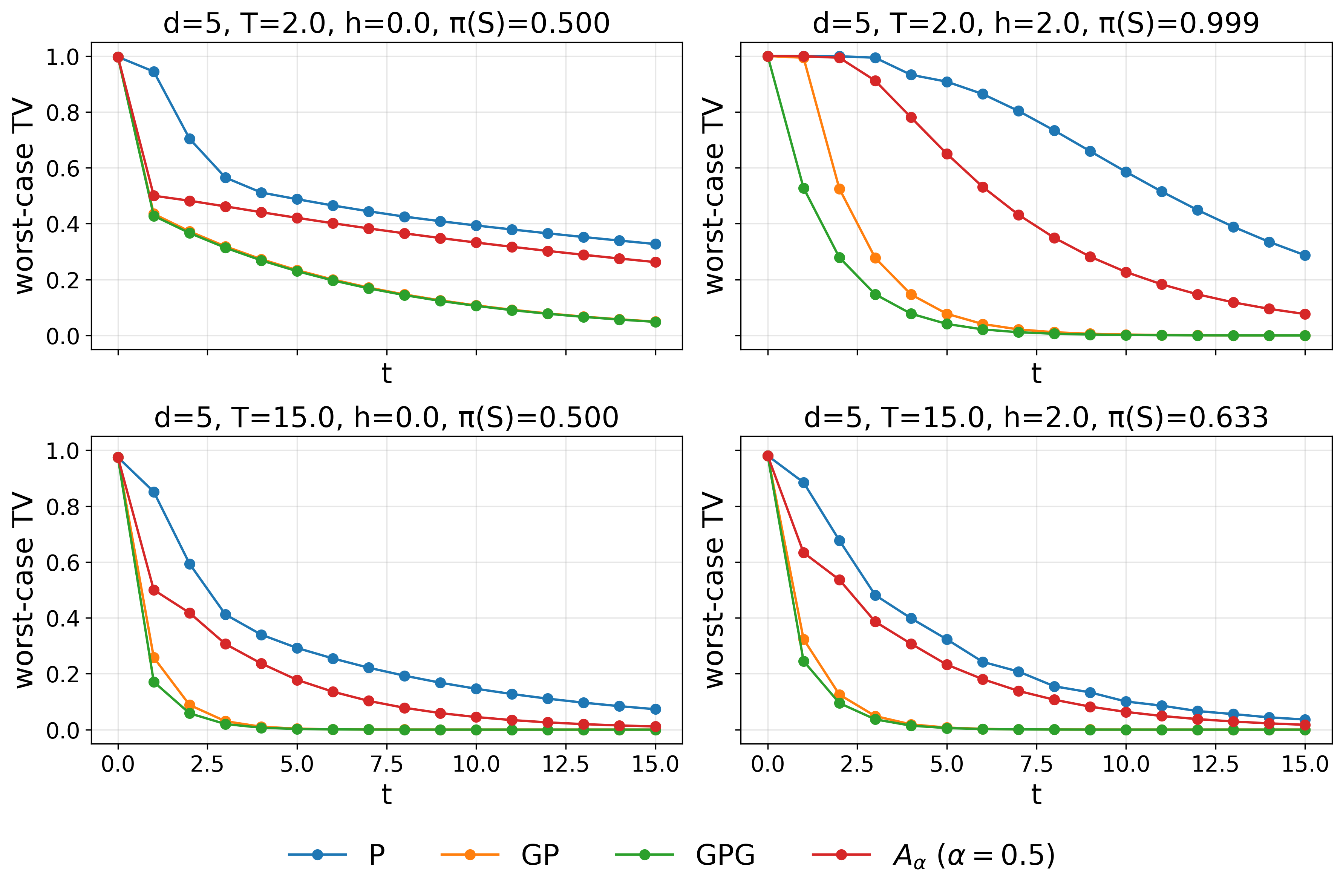}
\caption{Worst-case total variation distance for different samplers.}
\label{fig:4sampler}
\end{figure}

Across all parameter regimes, we observe a clear and consistent hierarchy in performance (from fastest to slowest mixing):
$$G_SPG_S,\ G_SP,\ A(S),\ P.$$

The composition-based kernels $G_S P G_S$ and $G_S P$ significantly accelerate convergence relative to the baseline $P$, with $G_S P G_S$ providing the strongest improvement. While the additive kernel $A(S)$ also consistently improves upon $P$, it remains slower than the composition-based alternatives.

This highlights an important structural distinction between the samplers. The kernel $A(S)$ blends local and global updates via randomisation, whereas $G_S P$ and $G_S P G_S$ combine these operations sequentially. Empirically, this sequential composition is observed to have improved mixing behaviour compared to additive samplers. 

These trends persist across both high- and low-temperature regimes, as well as across varying external fields. In particular, even in highly imbalanced settings where most of the stationary mass concentrates on one side of the partition, the composition-based kernels remain effective, whereas the baseline $P$ mixes significantly slower.

Finally, we note that the computational cost per iteration differs across the kernels. In particular, $G_S P G_S$ involves two applications of the Gibbs kernel $G_S$ and is therefore likely to be more expensive per step than $P$ or $A(S)$. As such, the present comparison evaluates convergence per kernel application, and a cost-normalised comparison may be required for a more comprehensive assessment.

\subsection{Comparison of $A_\alpha$ with other group-averaging samplers under optimal partition}

We now compare the kernels $P, G_SP, G_SPG_S$ and $A(S)$ when each sampler other than $P$ is paired with the optimal choice of $S$ under the squared Frobenius distance to stationarity. That is, we consider the partition $\mathcal{X} = S \sqcup S'$ under the objectives 
$$\Tr(PG_SP),\ \Tr(G_SPG_SPG_S),\ \frac{1}{4}\Tr(P^2 + PG_S + G_SP + G_S)$$
corresponding to the samplers $G_SP, G_SPG_S$ and $A(S)$. We perform this optimisation via brute-force search over all non-trivial sets $S$ for each of the three partition-dependent samplers. 

Figure \ref{fig:tv_optcuts} shows the worst-case TV distance of the resulting kernels paired with their optimal cuts. Figure \ref{fig:cut_by_magnetisationGP} further reveals the magnetisation profiles of the corresponding optimal cuts. 

\begin{figure}[H]
\centering
\includegraphics[width=1\linewidth]{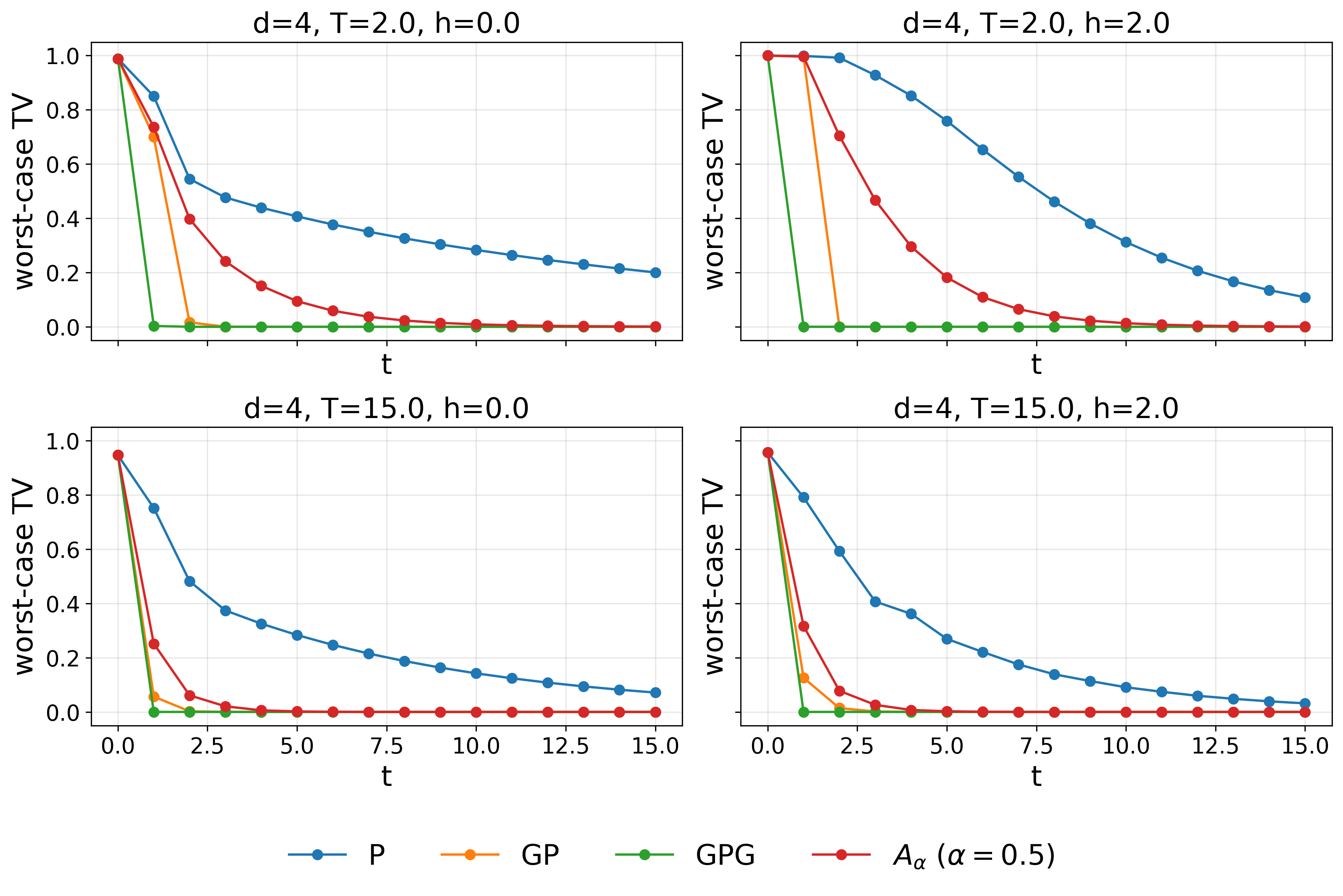}
\caption{Worst-case total variation distance for different samplers, with each partition-dependent sampler using its own Frobenius-optimal partition.}
\label{fig:tv_optcuts}
\end{figure}

\begin{figure}[H]
    \centering

    \begin{subfigure}{\linewidth}
        \centering
        \includegraphics[height=0.20\textheight]{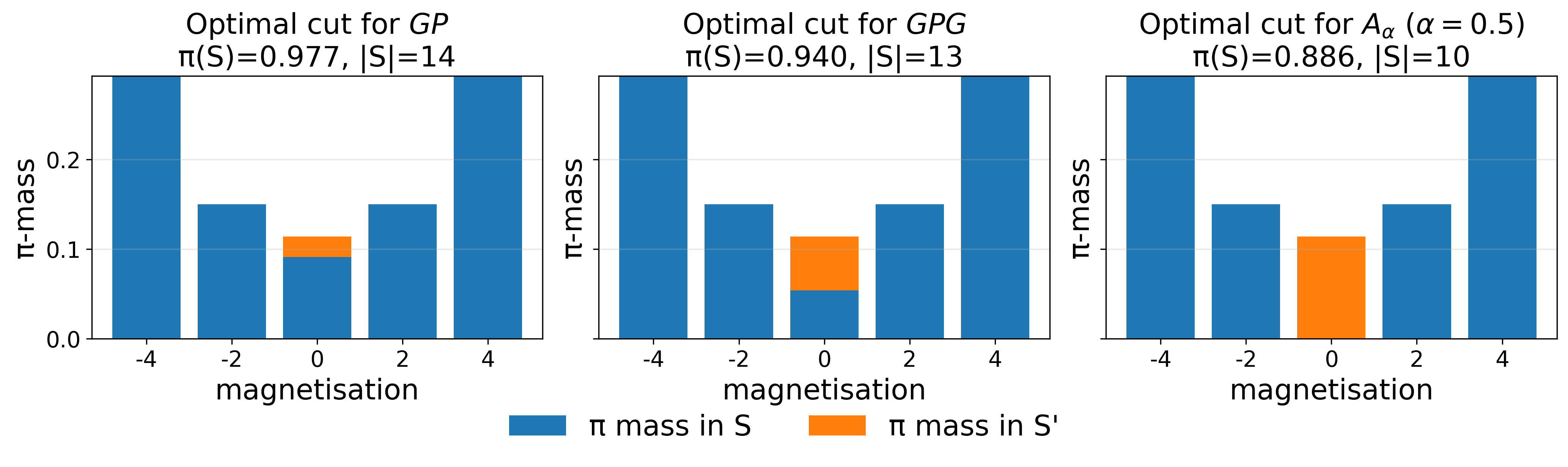}
        \caption{$T=2,\ h=0$}
    \end{subfigure}

    \begin{subfigure}{\linewidth}
        \centering
        \includegraphics[height=0.20\textheight]{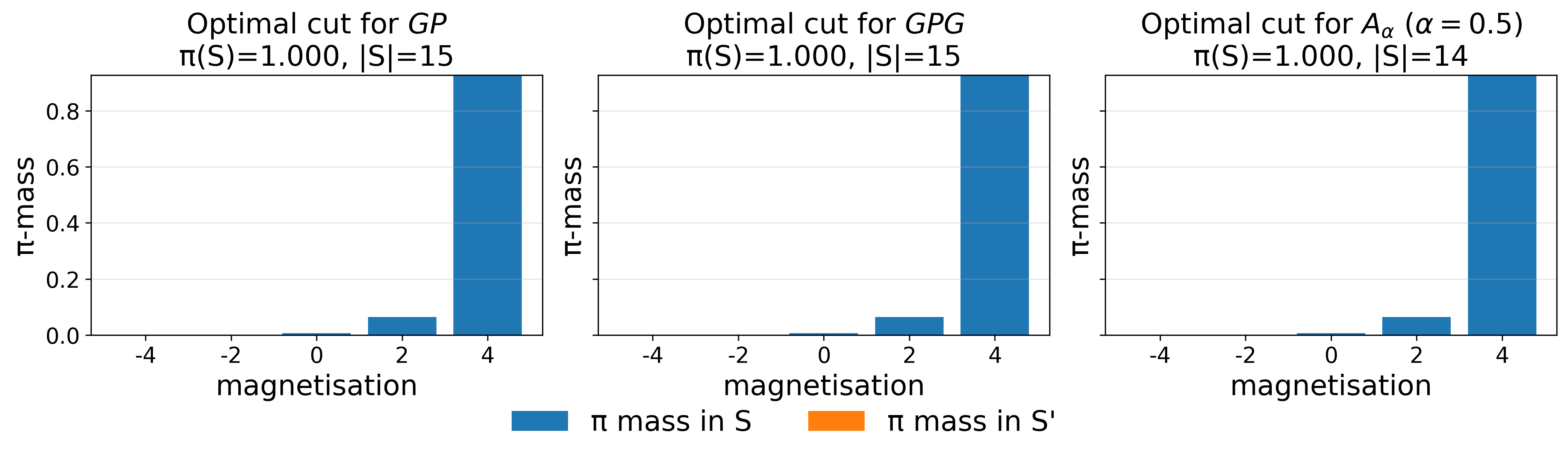}
        \caption{$T=2,\ h=2$}
    \end{subfigure}

    \begin{subfigure}{\linewidth}
        \centering
        \includegraphics[height=0.20\textheight]{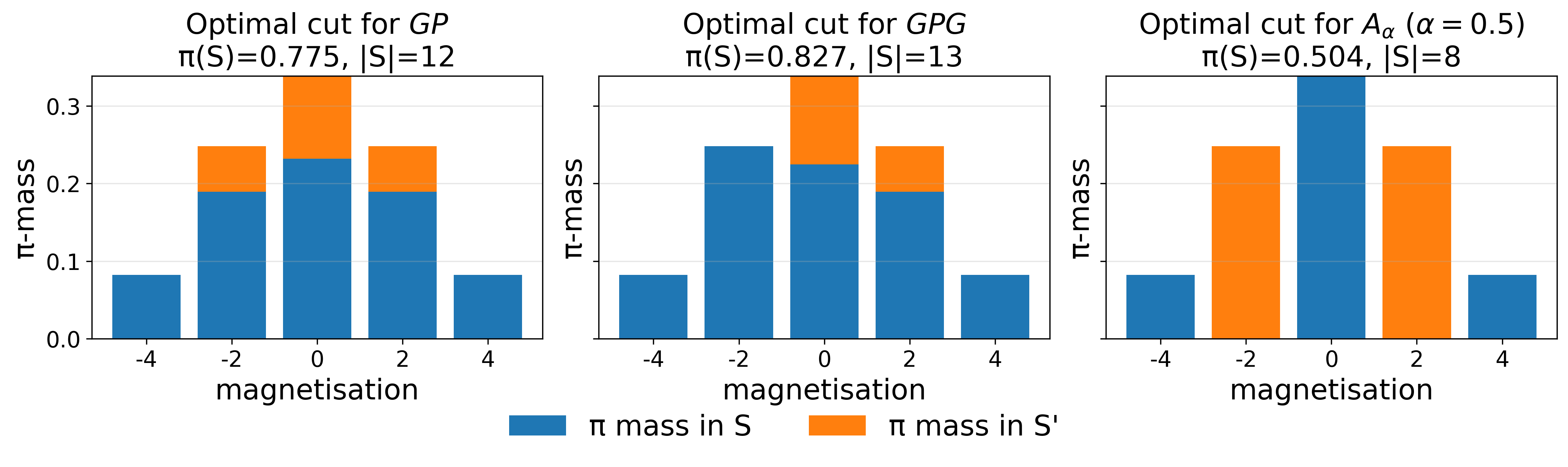}
        \caption{$T=15,\ h=0$}
    \end{subfigure}

    \begin{subfigure}{\linewidth}
        \centering
        \includegraphics[height=0.20\textheight]{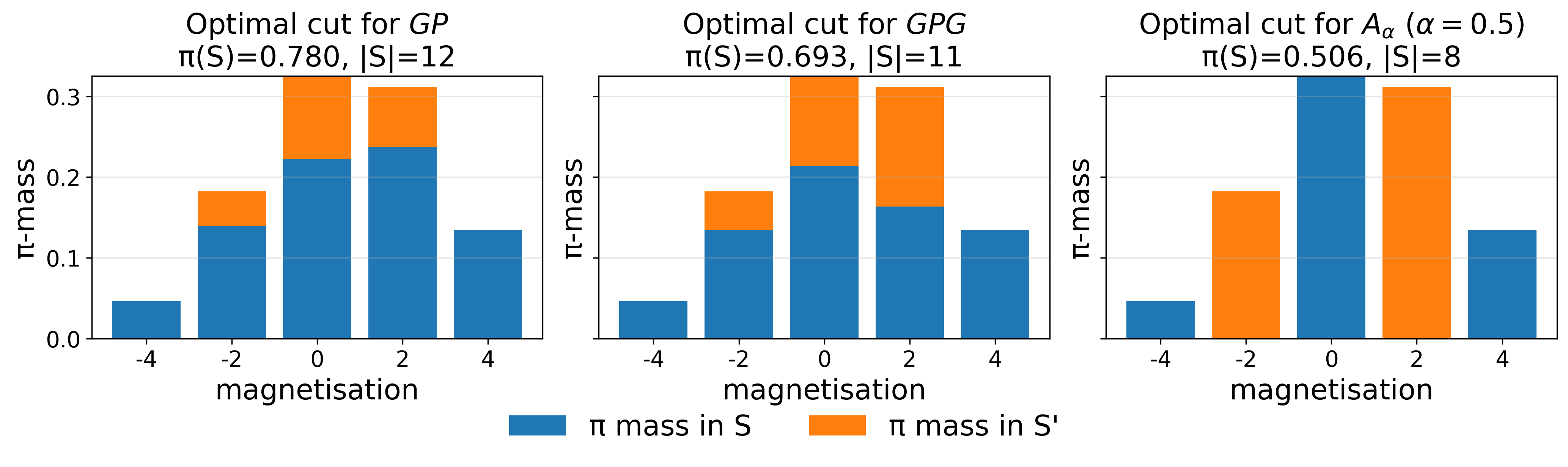}
        \caption{$T=15,\ h=2$}
    \end{subfigure}

    \caption{Magnetisation profiles of the Frobenius-optimal cuts for the Curie--Weiss model with $d=4$ for various samplers.}
    \label{fig:cut_by_magnetisationGP}
\end{figure}

Figure \ref{fig:tv_optcuts} again suggests the same hierarchy in mixing performance (from fastest to slowest):
$$G_SPG_S,\ G_SP,\ A(S),\ P.$$

Comparing both Figures \ref{fig:4sampler} and \ref{fig:tv_optcuts}, we see that all the partition-dependent samplers have improved effectiveness when using their respective optimal cuts compared to the one considered in Section \ref{subsec: fixedS}.

Figure \ref{fig:cut_by_magnetisationGP} reveals a clear structural difference between the optimal cuts selected by the three partition-dependent kernels. The Frobenius-optimal partitions for $G_SP$ and $G_SPG_S$ tend to be highly unbalanced, often placing almost all of the stationary mass into one block. This is especially pronounced in the low-temperature $(T = 2)$ regimes.

In contrast, the optimal cut for $A(S)$ are consistently more balanced in most regimes, particularly the ones with high temperature $(T=15)$. In that setting, the optimal choice of $S$ is almost symmetric with $\pi(S) \approx 1/2.$ In contrast, the optimal choice for $G_SP$ and $G_SPG_S$, while not as extreme as that in the $T=2$ case, still place noticeably more stationary mass in one block. 

Overall, the results seem to suggest that the Frobenius objective selects relatively similar cuts for the multiplicative group-averaged samplers, as compared to the additive mixture $A(S)$. For the composition-based kernels $G_SP$ and $G_SPG_S$, the optimal cuts are often highly concentrated and appear to exploit the dominant stationary mode as aggressively as possible. For the additive kernel $A(S)$, the optimal cuts are more balanced and correspond to a more moderate partition of the state space.

Finally, we again note that the comparison in Figure \ref{fig:tv_optcuts} is made per kernel application. Since one step of $G_SP$ and $G_SPG_S$ involves two applications of the Gibbs kernel, while one step of $A(S)$ involves only a single randomised update, the computational cost per iteration is not identical across samplers. Thus, the present experiment compares the kernels at the operator level rather than under a cost-normalised runtime model.

\subsection{Effect of the parameter $\alpha$ on the mixing behaviour of $A_\alpha$}

We now investigate the effect of the parameter $\alpha$ on the mixing behaviour of the additive kernel $A_\alpha(S) = \alpha P + (1-\alpha)G_S,$ under the same partition $S = \{x \in \mathcal{X} : m(x) \geq 0\}$. We first examine the full convergence trajectories for several fixed values of $\alpha$, before studying the dependence on $\alpha$ at fixed time horizons.

Figure \ref{fig:alpha_time} shows the worst-case total variation (TV) distance as a function of time for representative values of $\alpha$ given by $\{0,\ 0.25,\ 0.5,\ 0.75,\ 1\}$. Again, we consider the four regimes comprising of combinations between $T\in \{2,15\}$ and $h \in \{0, 2\}.$

\begin{figure}[h]
\centering
\includegraphics[width=1\linewidth]{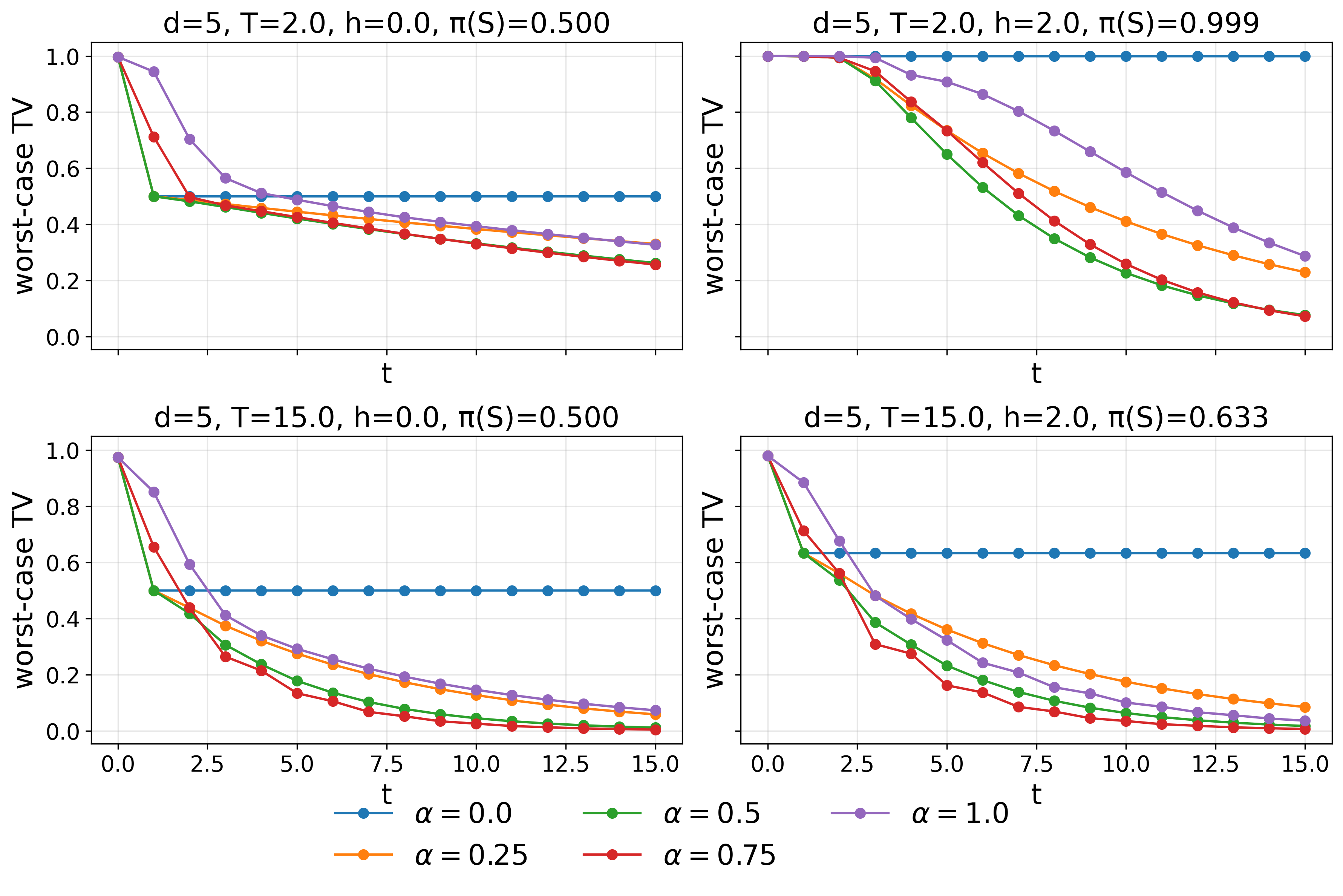}
\caption{Worst-case total variation distance of $A_\alpha(S)$ for different values of $\alpha$ against $t$.}
\label{fig:alpha_time}
\end{figure}

We observe that the extreme cases $\alpha = 0$ and $\alpha = 1$ exhibit fundamentally different limitations. When $\alpha = 0$, the kernel reduces to the Gibbs kernel $G_S$, which performs averaging within each block but does not allow transitions between blocks. As a result, the TV distance fails to converge to zero, reflecting the inability of the chain to explore the full state space. In contrast, when $\alpha = 1$, the kernel reduces to the baseline $P$, which mixes slowly due to its local update structure.

In comparison, intermediate values such as $\alpha = 0.5$ and $\alpha = 0.75$ achieve significantly faster decay of the TV distance across all regimes. This demonstrates that effective mixing requires both cross-block exploration and within-block averaging, and that $A_\alpha$ successfully combines these two mechanisms.

However, when $\alpha$ is too small, such as $\alpha = 0.25$, performance deteriorates. In some regimes, notably when $T = 15$ and $h = 2$, the TV distance is worse than that of the baseline $P$. This indicates that although $G_S$ facilitates within-block mixing, it is by itself a poor sampler due to its inability to move between blocks. Consequently, when $\alpha$ is too small, the chain spends most of its time performing within-block updates, effectively becoming trapped within orbits and leading to poor overall mixing performance.

We next quantify the dependence of the mixing behaviour on $\alpha$. Figure \ref{fig:alpha} plots the worst-case TV distance at fixed time horizons $t = 3, 5,$ and $10$ as a function of $\alpha$.

\begin{figure}[h]
    \centering
    \includegraphics[width=1\linewidth]{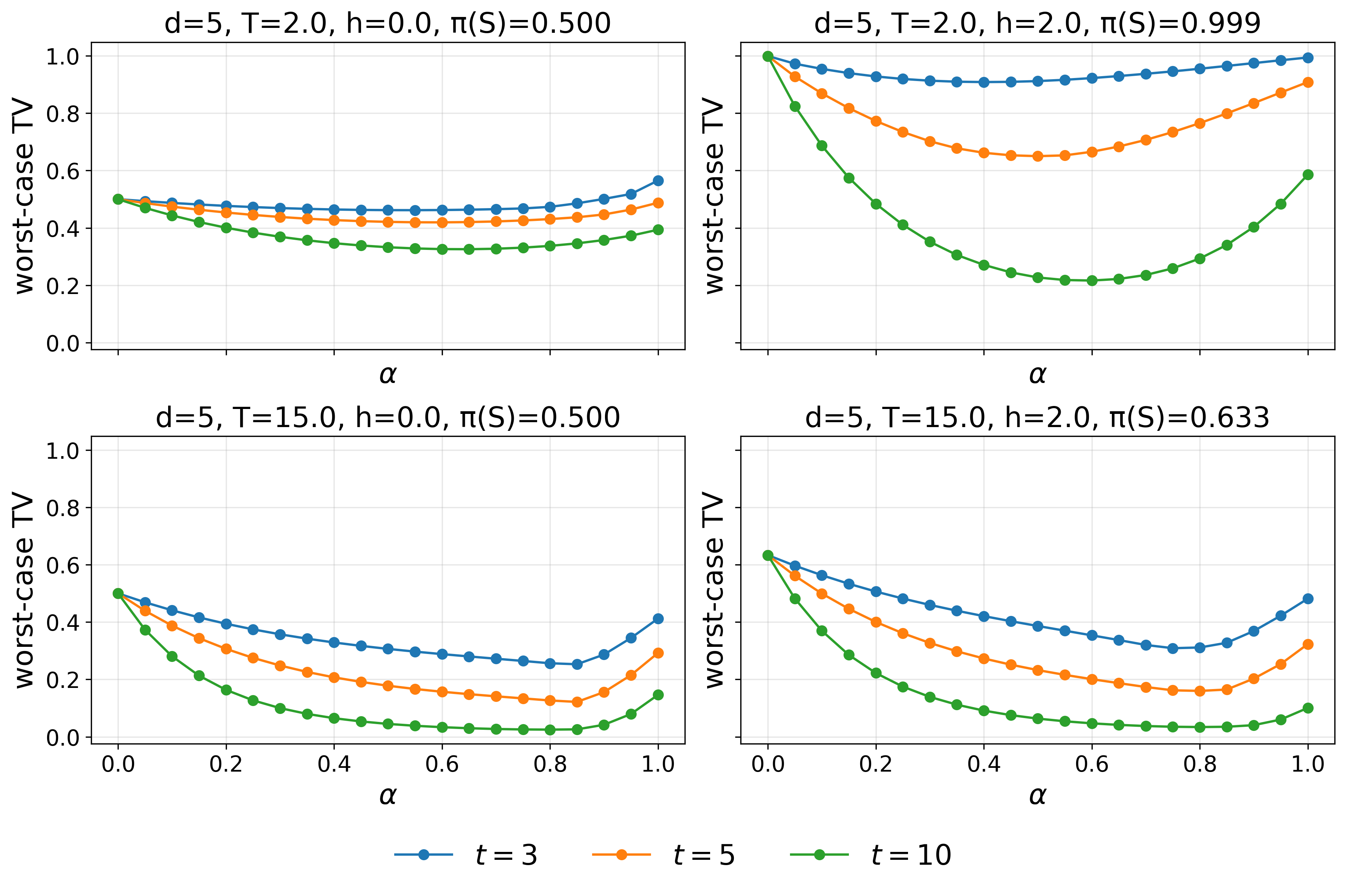}
    \caption{Worst-case total variation distance of $A_\alpha(S)$ as a function of $\alpha$ for different time horizons.}
    \label{fig:alpha}
\end{figure}

Across all regimes and time horizons, we observe a clear U-shaped dependence of the TV distance on $\alpha$. In particular, both extreme choices $\alpha = 0$ and $\alpha = 1$ lead to poorer performance, while intermediate values of $\alpha$ achieve significantly lower TV distance.

This behaviour reflects the competing roles of the two components in $A_\alpha(S)$. Pure averaging ($\alpha = 0$) fails to move between blocks, while pure local updates ($\alpha = 1$) mix slowly due to metastability. Intermediate values of $\alpha$ balance these effects, leading to improved convergence. The optimal value of $\alpha$ depends on the model parameters and the time horizon, but is consistently observed to lie close to, or above $\alpha = 0.5$

\section*{Declarations}

\paragraph{Competing interests.}
Both authors have no relevant financial or non-financial interests to disclose.

\paragraph{Data availability.}
No data was used for the research described in the article.

\section*{Acknowledgements}

We acknowledge the careful reading and constructive comments of two reviewers that have significantly improved the quality of the manuscript. Michael Choi acknowledges the financial support of the projects A-0000178-02-00 and A-8003574-00-00 at National University of Singapore.

\bibliography{ref}

@misc{GPG,
      title={{Group-averaged Markov chains II: tuning of group action in finite state space}}, 
      author={Michael C. H. Choi and Ryan J. Y. Lim and Youjia Wang},
      year={2025},
      eprint={2512.13067},
      archivePrefix={arXiv},
      primaryClass={math.PR},
      url={https://arxiv.org/abs/2512.13067}, 
}

@article{Jerrum_2004,
author={Jerrum,Mark and Son,Jung-Bae and Tetali,Prasad and Vigoda,Eric},
year={2004},
title={{Elementary Bounds on Poincaré and Log-Sobolev Constants for Decomposable Markov Chains}},
journal={The Annals of applied probability},
volume={14},
number={4},
pages={1741-1765},
isbn={1050-5164},
language={English},
}

@misc{submodular,
      title={Optimising two-block averaging kernels to speed up Markov chains}, 
      author={Ryan J. Y. Lim and Michael C. H. Choi},
      year={2026},
      eprint={2603.10318},
      archivePrefix={arXiv},
      primaryClass={math.PR},
      url={https://arxiv.org/abs/2603.10318}, 
}

@article{Montenegro,
author = {Ravi Montenegro},
title = {{Sharp edge, vertex, and mixed Cheeger type inequalities for finite Markov kernels}},
volume = {12},
journal = {Electronic Communications in Probability},
publisher = {Institute of Mathematical Statistics and Bernoulli Society},
pages = {377 -- 389},
year = {2007},
doi = {10.1214/ECP.v12-1269},
URL = {https://doi.org/10.1214/ECP.v12-1269}
}

@book{roch_mdp_2024,
place={Cambridge},
series={Cambridge Series in Statistical and Probabilistic Mathematics},
title={{Modern Discrete Probability: An Essential Toolkit}},
DOI={10.1017/9781009305129},
publisher={Cambridge University Press},
author={Roch, Sebastien},
year={2024},
collection={Cambridge Series in Statistical and Probabilistic Mathematics}
}

@misc{choi2026geometryfactorizationmultivariatemarkov,
      title={Geometry and factorization of multivariate Markov chains with applications to MCMC acceleration and approximate inference}, 
      author={Michael C. H. Choi and Youjia Wang and Geoffrey Wolfer},
      year={2026},
      eprint={2404.12589},
      archivePrefix={arXiv},
      primaryClass={math.PR},
      url={https://arxiv.org/abs/2404.12589}, 
}

@inbook{Krause_Golovin_2014, place={Cambridge}, title={{Submodular Function Maximization}}, booktitle={{Tractability: Practical Approaches to Hard Problems}}, publisher={Cambridge University Press}, author={Krause, Andreas and Golovin, Daniel}, year={2014}, pages={71–104}}

@article{uriel_2011,
author = {Feige, Uriel and Mirrokni, Vahab S. and Vondr\'{a}k, Jan},
title = {Maximizing Non-monotone Submodular Functions},
journal = {SIAM Journal on Computing},
volume = {40},
number = {4},
pages = {1133-1153},
year = {2011},
doi = {10.1137/090779346},
URL = {https://doi.org/10.1137/090779346
},
eprint = { 
        https://doi.org/10.1137/090779346
}}

@misc{iyer2013algorithmsapproximateminimizationdifference,
      title={{Algorithms for Approximate Minimization of the Difference Between Submodular Functions, with Applications}}, 
      author={Rishabh Iyer and Jeff Bilmes},
      year={2013},
      eprint={1207.0560},
      archivePrefix={arXiv},
      primaryClass={cs.DS},
      url={https://arxiv.org/abs/1207.0560}, 
}

@book{Bovier_denHollander_2015,
    author={Bovier,Anton and den Hollander,Frank},
    year={2015},
    title={{Metastability: A Potential-Theoretic Approach}},
    publisher={Springer International Publishing},
    address={Cham},
    volume={351},
    edition={1st 2015.;1;},
    isbn={0072-7830},
    language={English},
}

@inproceedings{chen_lifting,
author = {Chen, Fang and Lov\'{a}sz, L\'{a}szl\'{o} and Pak, Igor},
title = {Lifting Markov chains to speed up mixing},
year = {1999},
isbn = {1581130678},
publisher = {Association for Computing Machinery},
address = {New York, NY, USA},
url = {https://doi.org/10.1145/301250.301315},
doi = {10.1145/301250.301315},
booktitle = {Proceedings of the Thirty-First Annual ACM Symposium on Theory of Computing},
pages = {275–281},
numpages = {7},
location = {Atlanta, Georgia, USA},
series = {STOC '99}
}

@article{Vucelja_lifting,

author={Vucelja,Marija},

year={2016},

title={Lifting—A nonreversible Markov chain Monte Carlo algorithm},

journal={American journal of physics},

volume={84},

number={12},

pages={958-968},

isbn={0002-9505},

language={English},

}

@article{TURITSYN2011410,
title = {Irreversible Monte Carlo algorithms for efficient sampling},
journal = {Physica D: Nonlinear Phenomena},
volume = {240},
number = {4},
pages = {410-414},
year = {2011},
issn = {0167-2789},
doi = {https://doi.org/10.1016/j.physd.2010.10.003},
url = {https://www.sciencedirect.com/science/article/pii/S0167278910002782},
author = {Konstantin S. Turitsyn and Michael Chertkov and Marija Vucelja}
}

@article{levine_gibbs,

author={Levine,Richard A. and Casella,George},

year={2008},

title={Comment: On Random Scan Gibbs Samplers},

journal={Statistical science},

volume={23},

number={2},

pages={192-195},

isbn={0883-4237},

language={English},

}

@article{roberts_geometric,
author = {Gareth Roberts and Jeffrey Rosenthal},
title = {{Geometric Ergodicity and Hybrid Markov Chains}},
volume = {2},
journal = {Electronic Communications in Probability},
number = {none},
publisher = {Institute of Mathematical Statistics and Bernoulli Society},
pages = {13 -- 25},
year = {1997},
doi = {10.1214/ECP.v2-981},
URL = {https://doi.org/10.1214/ECP.v2-981}
}

@misc{choi2025groupaveragedmarkovchainsmixing,
      title={Group-averaged Markov chains: mixing improvement}, 
      author={Michael C. H. Choi and Youjia Wang},
      year={2025},
      eprint={2509.02996},
      archivePrefix={arXiv},
      primaryClass={math.PR},
      url={https://arxiv.org/abs/2509.02996}, 
}
\end{document}